\renewcommand{\baselinestretch}{\baselinestretch}
\renewcommand{\baselinestretch}{1.1}
\numberwithin{equation}{section}
\newtheorem{thm}{Theorem}[section]
\newtheorem{lem}[thm]{Lemma}
\theoremstyle{definition}
\theoremstyle{remark}
\newtheorem{rmk}[thm]{Remark}
\numberwithin{equation}{section}
\newcommand{\ra}{{\ \longrightarrow \ }}
\newcommand{\nra}{{\ \longarrownot\longrightarrow \ }}
\newcommand{\gen}{\text{gen}}
\newcommand{\ord}{\text{ord}}
\newcommand{\n}{{\mathbb N}}
\newcommand{\z}{{\mathbb Z}}
\newcommand{\q}{{\mathbb Q}}
\newcommand{\Mod}[1]{\ (\mathrm{mod}\ #1 )}
\newcommand{\zpd}{\overset{\hbox{\tiny ${}_\bullet$}}{\z}_p}
\newcommand{\zpds}{(\overset{\hbox{\tiny ${}_\bullet$}}{\z}_p)^2}
\newcommand{\qpd}{\overset{\hbox{\tiny ${}_\bullet$}}{\q}_p}
\newcommand{\qpds}{(\overset{\hbox{\tiny ${}_\bullet$}}{\q}_p)^2}
\newcommand{\qtwods}{(\overset{\hbox{\tiny ${}_\bullet$}}{\q}_2)^2}
\newcommand{\ani}{{\mathbb A}}
\newcommand{\hyper}{{\mathbb H}}
\newcommand{\both}{{\mathbb B}}
\begin{document}
\title{The rank of new regular quadratic forms}

\author{Mingyu Kim}
\address{Department of Mathematics, Sungkyunkwan University, Suwon 16419, Korea}
\email{kmg2562@skku.edu}
\thanks{This work of the first author was supported by the National Research Foundation of Korea(NRF) grant funded by the Korea government(MSIT) (NRF-2021R1C1C2010133).}

\author{Byeong-Kweon Oh}
\address{Department of Mathematical Sciences and Research Institute of Mathematics, Seoul National University, Seoul 151-747, Korea}
\email{bkoh@snu.ac.kr}
\thanks{This work of the second author was supported by the National Research Foundation of Korea(NRF) grant funded by the Korea government(MSIT) (NRF-2019R1A2C1086347 and NRF-2020R1A5A1016126).}

\subjclass[2010]{Primary 11E12, 11E20} \keywords{regular quadratic forms}
\begin{abstract}
A (positive definite and integral) quadratic form $f$ is called regular if it represents all  integers that are locally represented. It is known that there are only 
finitely many regular ternary quadratic forms up to isometry. However, there  are infinitely many equivalence classes of regular quadratic forms of rank $n$ for any integer $n$ greater than or equal to $4$. 
A regular quadratic form  $f$ is called {\it new} if there does not exist a proper subform $g$ of $f$ such that the set of integers that are represented by $g$  is equal to the set of integers that are represented  by $f$.  In this article, we prove that the rank of any new regular quadratic form is bounded by an absolute constant.  
\end{abstract}
\maketitle

\section{Introduction}

 For a positive definite integral quadratic form 
$$
f(x_1,x_2,\dots,x_n)=\sum_{i,j=1}^n a_{ij}x_ix_j \qquad (a_{ij}=a_{ji} \in \z),
$$
 the set of integers that are represented by $f$ is denoted by $Q(f)$.  It is quite an old number theory problem to determine the set $Q(f)$ precisely. In fact, there is no known general method at present if the number of variables of $f$, which we call the rank of $f$, is three.   
 We say $f$ is {\it regular} if it represents all integers that are locally represented. Since we may effectively determine all integers that are locally represented by $f$ by \cite{OM2}, it is easy to determine the set $Q(f)$ 
 precisely if $f$ is regular.  After the terminology ``regularity'' was first introduced in 1927 by Dickson, 
Watson proved in his unpublished thesis \cite{w0} that there are only finitely many equivalence classes of (primitive) regular (positive definite integral) ternary quadratic forms. In 1995, Earnest showed in \cite{ea} that there are infinitely many regular quaternary quadratic forms (see also \cite{kimb}).  
Since the diagonal quadratic form $x^2+y^2+z^2+t^2$ represents all nonnegative integers, the quadratic form $I(f)=x^2+y^2+z^2+t^2+f(z_1,z_2,\dots,z_k)$ of rank $k+4$ is also regular for any quadratic form $f(z_1,z_2,\dots,z_k)$ of rank $k$. One may naturally consider the terminology of a ``new'' regular quadratic form to avoid this kind of an obvious example.  We say a regular quadratic form $f$ is new if there is no proper subform $g$ of $f$ such that $Q(g)=Q(f)$.  From the definition, the quadratic form $I(f)$ defined above is not new, though it is regular. Now, one may ask whether or not there are finitely many new regular quadratic forms of given rank. Related with this question, one may show that   the quaternary quadratic form $I_r=x^2+y^2+z^2+2^{2r-1}t^2$ is new regular for any positive integer $r$.  The regularities of these forms can partly  be  explained by the regularity of the ternary subform $x^2+y^2+z^2$.  A quadratic form $f$ is {\it strongly new regular} if it is a new regular quadratic form of rank greater than or equal to $4$ and, moreover,  there is no ternary subform $h$ of $f$ such that 
\begin{equation} \label{1}
Q(h)=Q(f)\cap Q(\q h),
\end{equation}
where $Q(\q h)$ is the set of integers that are represented by $h$ over $\q$.  Note that the quaternary quadratic form $I_r$ is not strongly new regular, though it is new regular for any positive integer $r$.   
 It was proved in \cite{CEO} that there are only finitely many equivalence classes of  strongly new regular quadratic forms(in fact, the terminology ``new'' in \cite{CEO} implies ``strongly new'' in the article).  To prove this, the authors heavily use the condition \eqref{1} to show that the $4$-th successive minimum of any strongly new regular quadratic form is bounded by an absolute constant.  This result implies that there are only finitely many strongly new regular quadratic forms of given rank, and furthermore, the rank of strongly new regular quadratic forms is bounded by an absolute constant. 

The aim of this article is to prove that the rank of new regular quadratic forms is bounded by an absolute constant, though there are infinitely many equivalence classes of new regular quaternary quadratic forms.  

Throughout this paper, the geometric language  of quadratic spaces and lattices will be adopted following \cite{OM}.   Let $R$ be the ring of rational integers $\z$ or the ring of $p$-adic integers $\z_p$ for a prime $p$.
An $R$-lattice $L=R\mathbf{x}_1+R\mathbf{x}_2+\cdots+R\mathbf{x}_k$ is a free $R$-module equipped with a non-degenerate symmetric bilinear map $B : L\times L \to R$.  The corresponding Gram matrix $M_L$ with respect to the basis $\{\mathbf x_i\}_{i=1}^k$ is defined by $(B(\mathbf{x}_i,\mathbf{x}_j))_{1\le i,j\le k}$. 
We always assume that every $R$-lattice $L$ is {\it integral} in the sense that the scale $\mathfrak{s}L$ of $L$ is contained in  $R$ and is positive definite when $R=\z$, that is, the corresponding Gram matrix  $M_L$ is positive definite.  For an $R$-lattice $L=R\mathbf{x}_1+R\mathbf{x}_2+\cdots+R\mathbf{x}_k$ and a symmetric $k\times k$ matrix $M=(m_{ij})$, if $B(\mathbf x_i,\mathbf x_j)=m_{ij}$ for any $i,j$ with $1\le i,j\le k$, 
then  we write 
$$
L \simeq M.
$$ 
In particular, if $M$ is diagonal, then we write $L \simeq \langle m_{11},m_{22},\dots,m_{kk}\rangle$. 

Let $L$ be a $\z$-lattice of rank $k$. We say $L$ is primitive if $\mathfrak s(L)=\z$. Here the scale $\mathfrak s(L)$ of $L$ is defined by the ideal generated by $\{B(\mathbf x,\mathbf y): \mathbf x,\mathbf y \in L\}$ in $\z$.  The norm $\mathfrak n(L)$ of $L$  is defined by the ideal generated by $\{Q(\mathbf x) : \mathbf x \in L\}$ in $\z$. 
The discriminant $dL$ of $L$ is defined by the determinant of the Gram matrix of $L$.
For any prime $p$, we define $L_p=L\otimes \z_p$, which is considered as a $\z_p$-lattice. For any positive integer $n \le k$, the set of all $\z$-lattices of rank $n$ that are represented by $L$ is denoted by $Q_n(L)$.  We also denote the set of $\z_p$-lattices of rank $n$ that are represented by $L_p$ by $Q_n(L_p)$. Finally, we denote by  $Q_n(\gen(L))$ the set of $\z$-lattices of rank $n$ that are represented by $L_p$ over $\z_p$ for any prime $p$. 
It is well known that $Q_n(\gen(L))$ is exactly equal to the set of $\z$-lattices of rank $n$ that are represented by a $\z$-lattice in the genus of $L$.   

A $\z$-lattice $L$ is called $n$-regular if $Q_n(L)=Q_n(\gen(L))$. There are  various finiteness results of equivalence classes of $n$-regular lattices of given rank, for example, \cite{ea}, \cite{jks}, \cite{o1}, and \cite{w0} for the case when $n=1$,  and \cite{CEO},  \cite{co}, and  \cite{ea1}  for the case when $n\ge 2$. 
  
An $n$-regular $\z$-lattice $L$ is called new if there is no proper sublattice $M$ of $L$ such that $Q_n(M)=Q_n(L)$. It was proved in \cite{CEO} that there are only finitely many new $n$-regular $\z$-lattices for any $n \ge 2$.  However, there are infinitely many equivalence classes of new $1$-regular $\z$-lattices of rank $4$, as mentioned above.  For this exceptional case, it is also proved in \cite{CEO} that there are only finitely many equivalence classes of strongly new $1$-regular $\z$-lattices.  In this article, we prove that the rank of new regular $\z$-lattices is bounded by an absolute constant. Our main ingredient is to use Theorem 3.3 of \cite{KKO}, which is as follows: Let $\mathcal S$ be a set of $\z$-lattices with bounded rank. A $\z$-lattice $L$ is called $\mathcal S$-universal if it represents all $\z$-lattices in $\mathcal S$. Then there always is a finite subset $\mathcal S_0 \subset S$ such that any $\mathcal S_0$-universal $\z$-lattice is, in fact, $\mathcal S$-universal. Such a finite set $\mathcal S_0$ is called a {\it finite $\mathcal S$-universality criterion set}. To prove the main result, we will critically use this theorem.  Since there is no known  quantitative version of Theorem 3.3 of \cite{KKO}, any explicit upper bound of the rank of new regular $\z$-lattices will not be provided.   

Throughout this article,  the anisotropic  even unimodular binary $\z_2$-lattice will be denoted by $\ani$, and the isotropic even unimodular binary $\z_2$-lattice will be denoted by $\hyper$. Hence we have
$$
\ani\simeq \begin{pmatrix} 2&1\\1&2\end{pmatrix} \quad \text{and} \quad \hyper\simeq \begin{pmatrix} 0&1\\1&0\end{pmatrix},
$$ 
for some suitable bases for $\ani$ and $\hyper$.

Any unexplained notations and terminologies can be found in  \cite {ki} or \cite {OM}.

\section{The set of integers represented by a $\z_p$-lattice}

Let $p$ be a prime. Throughout this article, we use the following notations:
$$
\qpd=\q_p-\{0\},\ \   \qpds=\{ \gamma^2 : \gamma \in \overset{\hbox{\tiny ${}_\bullet$}}{\q}_p \},  \quad 
\zpd=\z_p-\{0\},  \ \  \zpds=\{ \gamma^2 : \gamma \in \zpd \},
$$
and
$$
\z_p^{\times}=\z_p-p\z_p,  \quad \quad  (\z_p^{\times})^2= \{ \gamma^2 : \gamma \in \z_p^{\times} \}.
$$
We define a subset $SC_p$ of $\z_p$ by
$$
SC_p=\begin{cases} \{1,3,5,7,2\cdot 1,2\cdot 3,2\cdot 5,2\cdot 7\} & \text{if}\ \ p=2,\\
\{ 1,\Delta_p,p,p\Delta_p\} & \text{otherwise,}\end{cases}
$$
where $\Delta_p$ denotes a non square unit in $\z_p^{\times}$ for each odd prime $p$.  

Let  $L$ be a $\z_p$-lattice of rank greater than or equal to $4$.
Let  $s$ be any element in $SC_p$.
Since we are assuming that $\text{rank}(L) \ge 4$, there is a nonnegative integer $u$ such that
$$
sp^{2u}\ra L, \quad \text{whereas} \quad sp^{2u-2}\nra L.
$$
With this integer $u$,  we define 
$$
\eta_{p,s}(L)=sp^{2u} \quad \text{and} \quad \nu_{p,s}(L)=\ord_p(\eta_{p,s}(L)).
$$
We also define
$$
\nu_p(L)=\max \{ \nu_{p,s}(L) : s\in SC_p\} \ \  \text{and} \ \ 
\nu'_p(L)=\max \left\{ \nu_{p,s}(L) : s\in SC_p-\{s_0\} \right\},
$$
where $s_0$ is any element in $SC_p$ satisfying $\nu_p(L)=\nu_{p,s_0}(L)$.

Let $K$ be a ternary $\z_p$-lattice.
If $K$ is isotropic, then the ternary quadratic space $\q_pK$ is universal and thus we may define the above quantities $\eta_{p,s}(K)$, $\nu_{p,s}(K)$, $\nu_p(K)$, and $\nu'_p(K)$ in the same manners.
However, if $K$ is anisotropic, then $\eta_{p,s}(K)$ cannot be defined for the element $s_1\in SC_p$ satisfying $-s_1\cdot dK\in \qpds$.
Though we do not define $\eta_{p,s_1}(K)$ in this case, we define $\nu_{p,s_1}(K)=\infty$ and $\nu_p(K)=\infty$.
Hence, $\nu_p(K)=\infty$ if and only if $K$ is anisotropic.
For any other $s\in SC_p-\{s_1\}$, we define both  $\eta_{p,s}(K)$ and $\nu_{p,s}(K)$ as above so that $\nu'_p(K)$ is defined by
$$
\nu'_p(K)=\max \left\{ \nu_{p,s}(K) : s\in SC_p-\{s_1\} \right \}.
$$
It is clear that $\nu_p(L)=1$ if and only if $Q(L)=\z_p$.
For a prime $p$ and a $\z$-lattice $L$ with rank greater than or equal to $3$, we define $\nu_p(L)$ and $\nu'_p(L)$ by
$$
\nu_p(L)=\nu_p(L_p)\ \ \text{and}\ \ \nu'_p(L)=\nu'_p(L_p).
$$

\begin{lem} \label{lemnunuprime}
Let $L$ be a $\z_p$-lattice with rank greater than or equal to $4$. Then we have
\begin{itemize} 
\item [(i)] $\nu_p(L)=\min \{ r : \vert \{ s\in SC_p: p^{r-1}s\ra L_p \} \vert =\vert SC_p\vert \}$,
\item [(ii)]  $\nu'_p(L)=\min \{ r  : \vert \{s\in SC_p: p^{r-1}s\ra L_p \} \vert \ge \vert SC_p\vert -1\}$.
\end{itemize}
Furthermore, if $M$ is a $\z_p$-sublattice of $L$ with rank greater than or equal to $3$, then we have
\begin{itemize} 
\item [(iii)] $\nu_p(L)\le \nu_p(M)$ and $\nu'_p(L)\le \nu'_p(M)$.
\end{itemize}
\end{lem}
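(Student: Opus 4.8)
The plan is to dispatch (iii) directly and to deduce (i) and (ii) together from an explicit formula for the counting function $r\mapsto\bigl|\{s\in SC_p : p^{r-1}s\ra L\}\bigr|$.

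For (iii), since $M$ is a sublattice of $L$ we have $Q(M)\subseteq Q(L)$, so whenever $\eta_{p,s}(M)=sp^{2u}$ is defined we get $sp^{2u}\ra L$, and the minimality built into the definition of $\eta_{p,s}(L)$ forces $\nu_{p,s}(L)\le\ord_p(s)+2u=\nu_{p,s}(M)$; in the one remaining case, namely $M$ anisotropic ternary with $\nu_{p,s}(M)=\infty$, the inequality $\nu_{p,s}(L)\le\nu_{p,s}(M)$ is trivial. Hence $\nu_{p,s}(L)\le\nu_{p,s}(M)$ for every $s\in SC_p$, and taking the maximum over $s$ gives $\nu_p(L)\le\nu_p(M)$. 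For the remaining inequality I use the elementary fact that if $a_s\le b_s$ for all $s\in SC_p$ then the second-largest of the $a_s$ is at most the second-largest of the $b_s$ (if two of the $a_s$ exceed a given $\alpha$, so do the two corresponding $b_s$); since $\nu'_p(L)$ and $\nu'_p(M)$ are, by their definition, the largest entries of $\{\nu_{p,s}(L)\}_{s\in SC_p}$ and $\{\nu_{p,s}(M)\}_{s\in SC_p}$ once one copy of the maximum is deleted, i.e.\ the second-largest entries, this gives $\nu'_p(L)\le\nu'_p(M)$.

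For (i) and (ii), the basic input is monotonicity: if $a\ra L$ then $ap^2\ra L$ (scale a vector by $p$), which together with the minimality defining $\eta$ shows that $tp^{2u}\ra L\iff\ord_p(tp^{2u})\ge\nu_{p,t}(L)$ for every $t\in SC_p$ and every integer $u\ge0$. Set $W=\{1,\Delta_p\}$ if $p$ is odd and $W=\{1,3,5,7\}$ if $p=2$, so that $|W|=|SC_p|/2$ and $SC_p=W\cup pW$, and put $g(v)=\bigl|\{w\in W : p^vw\ra L\}\bigr|$ for $v\ge0$. Writing $p^vw=(pw)p^{v-1}$ when $v$ is odd, and using $\ord_p(w)=0$ and $\ord_p(pw)=1$, one finds that $g(v)$ equals $|\{w\in W:\nu_{p,w}(L)\le v\}|$ for even $v$ and $|\{w\in W:\nu_{p,pw}(L)\le v\}|$ for odd $v$; moreover, splitting $SC_p=W\cup pW$ and using $p^{r-1}(pw)=p^rw$, one gets $\bigl|\{s\in SC_p : p^{r-1}s\ra L\}\bigr|=g(r-1)+g(r)$. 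Now let $E\ge e$ be the two largest of the (even) numbers $\nu_{p,w}(L)$ over $w\in W$, and $O\ge o$ the two largest of the (odd) numbers $\nu_{p,pw}(L)$ over $w\in W$; then for even $v$ one has $g(v)=|W|\iff v\ge E$ and $g(v)\ge|W|-1\iff v\ge e$, while for odd $v$ one has $g(v)=|W|\iff v\ge O$ and $g(v)\ge|W|-1\iff v\ge o$. Note that $E\ne O$, that $\nu_p(L)=\max(E,O)$, and that $\nu'_p(L)=\max(e,O)$ if $E>O$ and $\nu'_p(L)=\max(E,o)$ if $O>E$.

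Finally one reads off the minima. Since $|SC_p|=2|W|$, part (i) asks for the least $r$ with $g(r-1)=g(r)=|W|$, and part (ii) for the least $r$ with $g(r-1)+g(r)\ge2|W|-1$, i.e.\ with one of $g(r-1),g(r)$ equal to $|W|$ and the other at least $|W|-1$. In each case exactly one of $r-1,r$ is even, so the condition becomes a pair of inequalities relating $r$ to $E,e,O,o$: for (i), $r\ge E$ together with $r-1\ge O$ when $r$ is even, and $r-1\ge E$ together with $r\ge O$ when $r$ is odd; for (ii), the analogous disjunctions, now also involving $e$ and $o$. A short case analysis --- according to whether $E>O$ or $O>E$, and to the parity of the minimizing $r$ --- then identifies the least such $r$ as $\max(E,O)=\nu_p(L)$ in case (i) and as the second-largest of $\{E,e,O,o\}$, equal to $\nu'_p(L)$, in case (ii); the case $\nu_p(L)=1$, equivalently $Q(L)=\z_p$, is immediate. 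I expect this concluding case analysis to be the only step that needs genuine checking: it is elementary but a little delicate for (ii), and one must be careful when the maximum of the $\nu_{p,s}(L)$ is attained more than once --- whereas the reformulation above makes it routine and uniform in $p$.
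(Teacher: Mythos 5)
The paper offers no proof of this lemma (it is explicitly left as an exercise), so there is no argument to compare against; judged on its own, your proof is correct and complete. The two ingredients that make it work are both in place: (a) $Q(L)\setminus\{0\}$ is a union of square classes and is closed under multiplication by $p^2$, so representability of $sp^{2u}$ is governed by the single threshold $\nu_{p,s}(L)$, which justifies the formula $\vert\{s\in SC_p: p^{r-1}s\ra L\}\vert=g(r-1)+g(r)$ and the characterizations of $g(v)=\vert W\vert$ and $g(v)\ge \vert W\vert-1$ in terms of $E,e,O,o$; and (b) the identification of $\nu_p$ and $\nu'_p$ as the largest and second-largest (with multiplicity) of the $\nu_{p,s}$, which is what makes the monotonicity argument for (iii) and the final case analysis for (i)--(ii) go through, including the cases where the maximum is attained more than once and the anisotropic ternary case $\nu_{p,s_1}(M)=\infty$. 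I checked the concluding case analysis (splitting on $E>O$ versus $O>E$ and on the parity of $r$, using that $E$ is even and $O$ is odd so $E\neq O$): the minimizing $r$ is indeed $\max(E,O)=\nu_p(L)$ in (i) and the second-largest of $\{E,e,O,o\}=\nu'_p(L)$ in (ii).
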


\begin{proof}
Since everything is quite clear,  the proofs are left as an exercise to the reader. 
\end{proof}


\begin{lem} \label{newt}  Let $L=L_1\perp \langle p^e\epsilon\rangle$ be a $\z_p$-lattice of rank $4$ such that the scale of the last Jordan component of the ternary sublattice $L_1$ contains $p^e\z_p$, where $e$ is a nonnegative integer and $\epsilon \in \z_p^{\times}$.  Then $\nu_p(L) \le e+1$. Furthermore, if   $L_1$ is anisotropic, then $e-2\delta_{2,p} \le \nu_p(L) \le e+1$.  Here $\delta$ is the Kronecker delta.  
\end{lem}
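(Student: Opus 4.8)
The plan is to read off $\nu_p(L)$ from Lemma~\ref{lemnunuprime}(i), which says that $\nu_p(L)\le e+1$ is equivalent to $p^{e}s\ra L$ for every $s\in SC_p$; since the elements $p^{e}s$ with $s\in SC_p$ run through the square classes of $\qpd$ of valuation $e$ and $e+1$, this amounts to showing that $L$ represents every nonzero element of $\z_p$ of $p$-adic valuation $e$ or $e+1$. I would prove this by splitting on whether $L_1$ is isotropic or anisotropic over $\q_p$.

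Suppose first that $L_1$ is isotropic. Reading off the Jordan decomposition of $L_1$, one sees that $L_1$ contains a rescaled hyperbolic plane $p^{J}\hyper$ with $J$ at most the valuation $i_0$ of the scale of the last Jordan component, hence $J\le e$. If $p$ is odd, $p^{J}\hyper$ already represents all of $p^{J}\z_p\supseteq p^{e}\z_p$, so $L_1$ --- and a fortiori $L$ --- represents every element of valuation $e$ or $e+1$, and Lemma~\ref{lemnunuprime}(iii) gives $\nu_p(L)\le\nu_p(L_1)\le J+1\le e+1$. If $p=2$, then $2^{J}\hyper$ represents $2^{J+1}\z_2$, which contains $2^{e}\z_2$ unless $J=e$; in that borderline case $i_0=e$ and $2^{e}\hyper\subseteq L_1$, so $L$ contains the ternary sublattice $2^{e}(\hyper\perp\langle\epsilon\rangle)$, which represents all of $2^{e}\z_2$ because $\hyper\perp\langle\epsilon\rangle$ is $\z_2$-universal ($\hyper$ represents $2\z_2$ and, $\epsilon$ being a unit, adjoining $\langle\epsilon\rangle$ fills in the odd $2$-adic integers). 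Again $\nu_p(L)\le e+1$ by Lemma~\ref{lemnunuprime}(iii).

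Suppose next that $L_1$ is anisotropic, so that $\q_pL_1$ is the anisotropic ternary quadratic space over $\q_p$, which represents every square class of $\qpd$ except $[-dL_1]$. Since $e\ge i_0$, at valuations $e$ and $e+1$ the lattice $L_1$ represents exactly the square classes represented by $\q_pL_1$ (a standard fact for anisotropic ternary $\z_p$-lattices). Hence $L_1$ represents every scalar of valuation $e$ or $e+1$ except a single square class, at the valuation $m\in\{e,e+1\}$ congruent to $j:=\ord_p(dL_1)\bmod 2$ and in the unit class $[u_{\mathrm{bad}}]$ determined by $[-dL_1]$. To represent this leftover $p^{m}u$ by $L$, I would use the summand $\langle p^{e}\epsilon\rangle$ and write $p^{m}u=Q(v)+p^{e}\epsilon w^{2}$: if $m=e$ and $u$ is in the square class of $\epsilon$, take $w\in\z_p^{\times}$ with $\epsilon w^{2}=u$ and $v=0$; otherwise choose $w\in\z_p^{\times}$ so that $Q(v)=p^{m}u-p^{e}\epsilon w^{2}$ has valuation exactly $e$ and lies in a square class represented by $L_1$. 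That such a $w$ exists is a short count of square classes: for $p$ odd with $m=e$, the $(p-1)/2$ residues $\{(u-\epsilon w^{2})\bmod p:w\in\z_p^{\times}\}$ are all nonzero (as $u$ and $\epsilon w^{2}$ lie in different square classes) and cannot all lie in $[u_{\mathrm{bad}}]$ (translating by $u$ would otherwise force $0$ into that class), so one of them lies in the good unit class; for $m=e+1$ the valuation $e$ is a ``good'' valuation for $L_1$, both unit classes being represented there, so any $w\in\z_p^{\times}$ works. For $p=2$ one runs the same scheme with congruences modulo $8$ and $16$ and with the extra bookkeeping forced by the $\ani$- and $\hyper$-type binary Jordan components; this anisotropic case --- verifying uniformly that the single missing square class at valuation $e$ or $e+1$ is always recovered after subtracting an appropriate $p^{e}\epsilon w^{2}$ --- is the step I expect to be the main obstacle.

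Finally, the lower bound, still with $L_1$ anisotropic. Put $j=\ord_p(dL_1)\bmod 2$ and fix an integer $k$ with $k\equiv j\pmod 2$ and $0\le k\le e-1-2\delta_{2,p}$ (if no such $k$ exists, the claimed inequality is vacuous since $\nu_p(L)\ge 1$ always). Let $s\in SC_p$ be the element with $[p^{k}s]=[-dL_1]$; its valuation is $\equiv j-k\equiv 0\pmod 2$, so $s$ is a unit. Then $p^{k}s\nra L$: if $p^{k}s=Q(v)+p^{e}\epsilon w^{2}$ with $v\in L_1$ and $w\in\z_p$, then $\ord_p(p^{e}\epsilon w^{2})\ge e>k$, so $\ord_p(Q(v))=k$ and $Q(v)/p^{k}=s-p^{e-k}\epsilon w^{2}$; since $e-k\ge 1+2\delta_{2,p}$, this unit is congruent to $s$ modulo $p$ (modulo $8$ when $p=2$), whence $Q(v)$ lies in the class $[p^{k}s]=[-dL_1]$, which $\q_pL_1$ does not represent --- a contradiction. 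One of $e-2-2\delta_{2,p}$ and $e-1-2\delta_{2,p}$ is $\equiv j\pmod 2$, so $k$ can be chosen with $k\ge e-2-2\delta_{2,p}$, and then Lemma~\ref{lemnunuprime}(i) turns $p^{k}s\nra L$ into $\nu_p(L)\ge k+2\ge e-2\delta_{2,p}$.
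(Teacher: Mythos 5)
Your lower bound is correct and is essentially the paper's argument: an element of the square class $[-dL_1]$ of valuation $k\le e-1-2\delta_{2,p}$ cannot equal $Q(\mathbf v)+p^e\epsilon w^2$ because the correction term cannot move it out of that class, and one of the two valuations $e-2-2\delta_{2,p}$, $e-1-2\delta_{2,p}$ has the parity of $\ord_p(dL_1)$. The upper bound, however, is where you diverge from the paper and where the genuine gaps are. In the isotropic case your key claim is that $L_1$ contains $p^J\hyper$ with $J\le i_0$. For odd $p$ this is harmless, but for $p=2$ the binary lattice spanned by a primitive isotropic vector $\mathbf v$ and a vector $\mathbf w$ with $B(\mathbf v,\mathbf w)=2^J$ is often of odd type, i.e.\ isometric to $2^J\langle \delta,-\delta\rangle$ rather than $2^J\hyper$ (inside $\langle 1,7\rangle$, for instance, every $\mathbf w$ with $B(\mathbf v,\mathbf w)$ odd has $Q(\mathbf w)$ odd), and $\langle 1,-1\rangle$ fails to represent $2\z_2^{\times}$; so the step ``$2^J\hyper\subseteq L_1$, hence $L$ represents all of valuation $e$ and $e+1$'' is not established, and the odd-type alternative only yields $2^J\z_2^{\times}\cup 2^{J+2}\z_2$, which is not enough by itself when $J\in\{e-1,e\}$. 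In the anisotropic case you invoke as a ``standard fact'' that at valuations $e$ and $e+1$ the lattice $L_1$ represents exactly the square classes represented by $\q_pL_1$; this is not citable in that generality, it itself requires a Jordan-type analysis, and you explicitly defer the $p=2$ bookkeeping for recovering the one missing class --- which is exactly the hard part. As it stands the inequality $\nu_p(L)\le e+1$ is not proved.

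The paper's proof of the upper bound is a short structural argument that makes no distinction between isotropic and anisotropic $L_1$: since every Jordan component of $L_1$ has scale containing $p^e\z_p$, each may be rescaled by a suitable power of $p$ so as to become $p^{e-1}$- or $p^e$-modular, and together with $\langle p^e\epsilon\rangle$ this exhibits inside $L$ either a $p^e$-modular quaternary sublattice or a rank-$4$ sublattice $\ell_1\perp\ell_2$ with $\ell_1$ zero or $p^{e-1}$-modular and $\ell_2$ a nonzero $p^e$-modular lattice; O'Meara's local representation theorems in \cite{OM2} then give $p^e\z_p\subseteq Q(L)$ at once, whence $\nu_p(L)\le e+1$. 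If you want to keep your route, you must (i) restate the isotropic case to allow an odd-type isotropic binary sublattice and then bring in the remaining components of $L$ to cover $2^{J+1}\z_2^{\times}$, and (ii) actually prove the representation behaviour of anisotropic ternary lattices at valuations $\ge i_0$, including the $p=2$ cases with $\ani$- and $\hyper$-components; the rescaling-plus-\cite{OM2} argument is both shorter and uniform.
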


\begin{proof}  From the assumption, $L$ contains either $p^e\z_p$-modular quaternary $\z_p$-sub-lattice or $\ell_1\perp \ell_2$, where $\ell_1$ is either $0$ or a $p^{e-1}\z_p$-modular $\z_p$-lattice and $\ell_2$ is  a nonzero $p^e\z_p$-modular $\z_p$-lattice. Hence by the result of \cite{OM2}, $L$ represents all $p$-adic integers in $p^e\z_p$. Therefore we have $\nu_p(L) \le e+1$. 

Now, assume that $L_1$ is anisotropic. Then $\q L_1$ is not universal, that is, there is a $p$-adic rational number that is not represented by $L_1$ over $\q_p$. Hence $L$ does not represent at least one element in  $p^{e-2-2\delta_{2,p}}\z_p^{\times} \cup p^{e-1-2\delta_{2,p}}\z_p^{\times}$. Then lemma follows from this.    
\end{proof}

\begin{rmk} \label{rmkanynu}
From the definition, we have $1\le \nu'_p(L)\le \nu_p(L)$ for any prime $p$ and any $\z_p$-lattice $L$ with rank greater than or equal to $3$.
\end{rmk}

\begin{rmk} \label{rmkprime}
By Lemma 3.1 of \cite{CEO}, there is a constant $C$ such that $\nu_p(L)=1$ for any primitive regular $\z$-lattice $L$ of rank greater than or equal to $4$ and  any prime $p$ greater than $C$. 
\end{rmk}


\begin{lem} \label{lemnuprime}
For each prime $p$, there is a constant $\nu'_p$ such that $\nu'_p(L)\le \nu'_p$ for any primitive regular $\z$-lattice $L$ of rank greater than or equal to $4$.
\end{lem}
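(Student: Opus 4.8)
The plan is to reduce the statement, via Lemma~\ref{lemnunuprime}(iii), to a claim about the $p$-adic Jordan splitting of $L$, and then to use regularity to exclude ``large $p$-adic gaps''. By Lemma~\ref{lemnunuprime}(iii), $\nu'_p(L)\le\nu'_p(M)$ for every sublattice $M$ of $L$ with $\rk(M)\ge 3$, so it suffices to find inside $L$ a rank-$3$ sublattice $M$ with $\nu'_p(M)$ bounded in terms of $p$ alone. The elementary point here is that $\nu'_p(M)$ is bounded in terms of $p$ and $\ord_p(dM)$ whenever $\mathfrak s(M_p)=\z_p$: from the description of $Q(M_p)$ in \cite{OM2}, once $r$ exceeds a bound depending only on $p$ and $\ord_p(dM)$, the lattice $M_p$ represents every square class $p^{r-1}s$ with $s\in SC_p$ except possibly the single class $-dM$, which can be missed only when $\q_pM$ is anisotropic, in which case exactly one class is omitted. (The one subtlety, occurring at $p=2$, is that a ternary lattice all of whose Jordan components have even scale may represent no element of odd $2$-adic valuation, so $\nu'_p$ could be infinite; this is avoided by enlarging $M$ by one vector of odd-valuation norm, which $\rk(L)\ge 4>3$ permits.) In particular, if the unimodular Jordan component $\mathfrak L_0$ of $L_p$ has rank $\ge 3$ we may take $M_p$ a unimodular ternary inside $\mathfrak L_0$ and conclude immediately, and if $\mathfrak L_0$ has rank $2$ and is isotropic over $\q_p$ then $Q(L_p)$ equals $\z_p$ up to a bounded set and $\nu'_p(L)$ is bounded by inspection.

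It therefore remains to treat the case $\rk(\mathfrak L_0)\le 2$, with $\mathfrak L_0$ anisotropic over $\q_p$ when $\rk(\mathfrak L_0)=2$, and everything reduces to the following local claim: there is a constant $D=D(p)$ such that, writing $L_p=\mathfrak L_0\perp\mathfrak L_1\perp\cdots$ and letting $k$ be least with $\rk(\mathfrak L_0\perp\cdots\perp\mathfrak L_k)\ge 3$ (so $k\le 2$ by primitivity), every primitive regular $L$ of rank $\ge 4$ satisfies $\ord_p(\mathfrak s\mathfrak L_k)\le D(p)$. Granting this, take $M$ spanned by $\mathfrak L_0,\dots,\mathfrak L_{k-1}$ and a sublattice of $\mathfrak L_k$ of the appropriate rank (and one further vector if $p=2$, as above); then $\mathfrak s(M_p)=\z_p$ and $\ord_p(dM)\le 3D(p)$, so the previous paragraph finishes the proof.

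The local claim is where regularity is used, and I would prove it by contradiction. Suppose $e:=\ord_p(\mathfrak s\mathfrak L_k)$ is large, with $r_0:=\rk(\mathfrak L_0\perp\cdots\perp\mathfrak L_{k-1})\in\{1,2\}$; thus $L_p\cong N\perp p^eN'$ where $N$ is a nonzero anisotropic $\z_p$-lattice with $\mathfrak sN=\z_p$ and $\rk N=r_0$, and $N'\ne 0$. Using regularity together with Remark~\ref{rmkprime} (so that $Q(L_q)=\z_q$ for every prime $q$ above the constant $C$ there) and the Chinese Remainder Theorem at the primes $q\le C$, the lattice $L$ represents a positive integer $t_1$, bounded in terms of $C$, which is a $p$-adic unit in a square class represented by $N$ (at each $q\le C$ with $q\ne p$ one simply picks the $q$-adic square class of $t_1$ inside the nonempty set $Q(L_q)$). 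Fixing a primitive $v_1\in L$ with $Q(v_1)=t_1$, the sublattice $\z v_1\oplus(v_1^{\perp}\cap L)$ has index $j$ in $L$ dividing $t_1$, hence prime to $p$, so $L_p=\langle t_1\rangle\perp(v_1^{\perp}\cap L)_p$ and the unimodular part of $(v_1^{\perp}\cap L)_p$ has rank $r_0-1$. When $r_0=1$ this leaves $(v_1^{\perp}\cap L)_p$ with no unimodular part, so its scale is $p^e\z_p$ and $p^e\mid Q(v)$ for all $v\in v_1^{\perp}\cap L$; choosing a positive integer $m$ that is a $p$-adic unit in the square class of $t_1$, that is represented by $L_q$ for all $q\le C$ with $q\ne p$, and such that every square root of $j^2m/t_1$ modulo $p^e$ lies well inside $[0,p^e)$, one gets $m\in Q(\gen(L))=Q(L)$, while any representation $m=Q(v)$, written $jv=bv_1+v'$ with $v'\in v_1^{\perp}\cap L$, yields $j^2m=b^2t_1+Q(v')$ with $Q(v')\ge 0$ divisible by $p^e$, forcing $|b|\gg p^{e/2}$ and hence $m\gg p^e$ — impossible once $e$ is large, since $m$ is constrained only modulo a bounded multiple of $p^e$. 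This bounds $e$ when $r_0=1$.

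The case $r_0=2$ is the main obstacle. One wants to repeat the argument after splitting off from $L$ a rank-$2$ sublattice $\lambda$ with $\lambda_p$ unimodular of rank $2$, so that its orthogonal complement has scale $p^e\z_p$ at $p$ and $p^e$ divides its norm form; a representation of a small $m$ would then force $\lambda$ to represent $j^2m$, which is impossible once $p^e$ is large, because a positive binary $\z$-form attains only $o(x)$ values up to $x$, so one can choose $m$ below a bounded multiple of $p^e$ whose image modulo $p^e$ is not attained by such small values. The difficulties are that regularity a priori yields only rank-$1$ integral representations, so obtaining the summand $\lambda$ requires either iterating the construction — representing a bounded $p$-adic unit by $v_1^{\perp}\cap L$ and splitting again — or a subtler argument, and that the density bookkeeping must be carried out simultaneously with the square-class conditions at the primes $\le C$ and with the condition $p\nmid m$. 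Making this precise is the technical heart of the proof; at $p=2$ the same strategy applies but the richer structure of binary and ternary $\z_2$-lattices forces several additional cases.
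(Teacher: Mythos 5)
Your opening reduction is exactly the paper's: by Lemma \ref{lemnunuprime}(iii) it suffices to exhibit inside $L$ a ternary sublattice $M$ with $\nu'_p(M)$ bounded independently of $L$. But where the paper obtains such an $M$ in one line by citing Corollary 3.2 of \cite{CEO} --- which supplies a single finite set $S=\{K(1),\dots,K(r)\}$ of ternary $\z$-lattices such that every primitive regular $\z$-lattice of rank $\ge 4$ represents some $K(i)$, whence $\nu'_p(L)\le \max_i \nu'_p(K(i))$ --- you set out to reprove that finiteness statement from scratch via your ``local claim'' bounding $\ord_p(\mathfrak s\mathfrak L_k)$. That claim is essentially the content of Lemma 3.1 of \cite{CEO}, and your treatment of it is incomplete: the case $r_0=2$ (unimodular Jordan component of rank $2$, anisotropic over $\q_p$, followed by a large gap) is only sketched, and you yourself flag the two unresolved points --- that regularity only hands you rank-one representations, so splitting off a binary summand $\lambda$ that is unimodular at $p$ requires an iterated or more delicate argument, and that the count of values of the positive binary form $\lambda$ must be run uniformly (its implied constant depends on $d\lambda$) against the square-class constraints at $p$ and at the primes below the constant $C$ of Remark \ref{rmkprime}. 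Since this is, as you say, the technical heart of the matter, the argument as written does not establish the lemma; it establishes it only in the cases $\rk(\mathfrak L_0)\ge 3$, $\mathfrak L_0$ binary isotropic, and $r_0=1$.

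Two smaller remarks. First, your parenthetical worry at $p=2$ is unfounded: for any ternary $\z_2$-lattice $K$ the space $\q_2K$ omits at most one square class of $\qpd$ (none if isotropic, exactly one if anisotropic), so $\nu'_2(K)$, which by definition discards a single element of $SC_2$, is always finite; the auxiliary fourth vector is not needed for finiteness, only possibly for uniformity of the bound. Second, if you want a self-contained argument rather than the citation, the efficient packaging is the one the paper uses: prove (or quote) the statement that every primitive regular $L$ of rank $\ge 4$ represents one of finitely many ternary lattices, and then combine your correct observation that $\nu'_p$ is bounded on ternary $\z_p$-lattices of bounded $\ord_p$ of the discriminant with Lemma \ref{lemnunuprime}(iii); everything you attempt beyond that reduction is a re-derivation of the cited result.
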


\begin{proof}
By Corollary 3.2 of \cite{CEO}, there is a finite set of ternary $\z$-lattices 
$$
S=\{ K(1),K(2),\dots,K(r)\}
$$
such that  any primitive regular $\z$-lattice with rank greater than or equal to $4$ represents at least one $K(i) \in S$ for some $i$ with $1\le i\le r$.  
For each prime $p$, we take
$$
C_p=\max \{ \nu'_p(K(i)) : 1\le i\le r\}.
$$
Let $L$ be any regular $\z$-lattice of rank greater than or equal to $4$.
Then there is an integer $j\in \{1,2,\dots,r\}$ such that $K(j)\ra L$.
Then we have, by Lemma \ref{lemnunuprime}(iii), 
$$
\nu'_p(L)\le \nu'_p(K(j))\le C_p.
$$
This completes the proof.
\end{proof}

\begin{lem} \label{lemnuodd}
Let $p$ an odd prime and let $L$ be a $\z_p$-lattice of rank $k$ with $k\ge 4$ such that $L \simeq \langle \epsilon_1,p^{e_2}\epsilon_2,p^{e_3}\epsilon_3,\dots,p^{e_k}\epsilon_k\rangle$, where $0\le e_2\le e_3\le \cdots \le e_k$ and $\epsilon_i\in \z_p^{\times}$.
If $\nu_p(\langle \epsilon_1,p^{e_2}\epsilon_2,p^{e_3}\epsilon_3\rangle)<\infty$, then $\nu_p(L)\le \nu'_p(L)+1$.
\end{lem}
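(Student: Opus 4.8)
Since $p$ is odd we have $|SC_p|=4$, and by Lemma~\ref{lemnunuprime} the numbers $\nu_p(L)$ and $\nu'_p(L)$ are the largest and the second largest among $\nu_{p,1}(L),\nu_{p,\Delta_p}(L),\nu_{p,p}(L),\nu_{p,p\Delta_p}(L)$; recall also that $\nu_{p,s}(L)$ is even for $s\in\{1,\Delta_p\}$ and odd for $s\in\{p,p\Delta_p\}$. If $\nu_p(L)\le 2$ we are done by Remark~\ref{rmkanynu}, so assume $\nu_p(L)\ge 3$. Write $T=\langle\epsilon_1,p^{e_2}\epsilon_2,p^{e_3}\epsilon_3\rangle$ and $B=\langle\epsilon_1,p^{e_2}\epsilon_2\rangle$. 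The plan is to prove the two statements
\[
\text{(I)}\quad \nu_{p,s}(L)=\nu_{p,s}(T)\ \text{ for every }s\in SC_p,\qquad\qquad \text{(II)}\quad \nu_p(T)\le\nu'_p(T)+1,
\]
which together give $\nu_p(L)=\nu_p(T)\le\nu'_p(T)+1=\nu'_p(L)+1$.

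The engine behind both statements is the following sub-claim, which is where the hypothesis $\nu_p(T)<\infty$ (i.e.\ $\q_pT$ isotropic) enters: \emph{$T$ represents every $n\in\z_p$ with $\ord_p(n)\ge e_3$}. I would prove this by splitting according to the Jordan type of $T$ (the cases $e_2=e_3$, $0=e_2<e_3$, $0<e_2<e_3$, together with the parities of $e_2,e_3$), stripping off the appropriate power of $p$ from $n$, and reducing the representation of $n$ to the representation of $p^{j}\z_p$ (for some $j\le 1$) by a smaller isotropic, hence universal, $\z_p$-lattice --- a unimodular ternary, an isotropic binary (in particular a hyperbolic plane), or a ternary $\langle u,u',p\rangle$ --- where one invokes O'Meara's representation criterion (\cite{OM2}) or a direct Hensel lifting. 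Isotropy of $T$ is precisely what excludes the ``infinite-descent'' configurations in which this reduction would fail. In particular $T$, and hence $L$, represents $s\,p^{e_3}$ or $s\,p^{e_3+1}$ for every $s\in SC_p$, so $\nu_p(T)\le e_3+1$; combined with $\nu_p(L)\ge 3$ and $\nu_p(L)\le\nu_p(T)$ this also gives $e_3\ge 2$.

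For (I): every Jordan component of $T$ other than $\langle\epsilon_1\rangle$ and $\langle p^{e_2}\epsilon_2\rangle$, as well as every component $\langle p^{e_j}\epsilon_j\rangle$ ($j\ge 4$) of the tail, has scale contained in $p^{e_3}\z_p$ and so can contribute only at $p$-adic order $\ge e_3$. Hence, for $n$ with $\ord_p(n)<e_3$, subtracting any such contribution changes neither $\ord_p(n)$ nor the square class of $n$, and one checks that $B$ (respectively $\langle\epsilon_1\rangle$, when $e_2=e_3$) represents, at each fixed order $<e_3$, either all or none of the elements of a given square class; consequently $n\ra L\iff n\ra T\iff n\ra B$ for such $n$. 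Since by the sub-claim $\nu_{p,s}(T)\le e_3+1$, the minimal-order element of the square class of $s$ that $T$ fails to represent has order $\le e_3-1<e_3$, so $L$ fails to represent it as well; together with $Q(T)\subseteq Q(L)$ this yields (I). For (II), I would analyse $B$: over $\q_p$ a binary form represents exactly two of the four square classes if anisotropic and all four if isotropic, while over $\z_p$ the extra integrality obstruction only bites at small order, so that $B$ represents, at order $v<e_3$: for $v<e_2$, only the square class of $\epsilon_1$ and only when $v$ is even; for $e_2\le v<e_3$, exactly the square classes $B$ represents over $\q_p$. Feeding this into Lemma~\ref{lemnunuprime}(ii) and counting level by level (using that $r-1$ and $r$ have opposite parity), one finds that unless $B$ is isotropic, at most two of the four classes $\{p^{r-1}s:s\in SC_p\}$ are represented by $T$ for every $r\le e_3-1$, whence $\nu'_p(T)\ge e_3\ge\nu_p(T)-1$; and if $B$ is isotropic (which forces $e_2$ even), then $B$ already represents all elements of order $\ge e_2$, and a short direct computation gives $\nu_p(T)\le\nu'_p(T)+1$ in that case too. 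This proves (II) and hence the lemma.

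I expect the only real difficulty to be organisational: both the sub-claim and the level-by-level count require running through the finitely many Jordan-type configurations of $T$ while carefully tracking square classes and parities, and one must take care to apply the ``all-or-nothing per square class at orders $<e_3$'' property of $B$ only in the range where it is valid.
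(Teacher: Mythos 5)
Your proposal is correct and takes essentially the same route as the paper: the paper's proof simply records, via Theorem 1 of \cite{OM2}, the exact values of $(\nu'_p(L),\nu_p(L))$ in the three cases determined by whether $-p^{e_2}\epsilon_1\epsilon_2\in\qpds$ and by the parity of $e_2$, which is precisely the case analysis of the binary section $\langle \epsilon_1,p^{e_2}\epsilon_2\rangle$ (yielding $\nu'_p\ge e_3$ or $e_2$ and $\nu_p\le e_3+1$ or $e_2+1$) that your sub-claim together with claims (I) and (II) organizes. Your write-up just supplies more of the mechanism that the paper compresses into ``one may easily check.''
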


\begin{proof}
Assume that $\nu_p(\langle \epsilon_1,p^{e_2}\epsilon_2,p^{e_3}\epsilon_3\rangle)<\infty$.
Then one may easily check by using Theorem 1 of  \cite{OM2} that 
$$
(\nu'_p(L),\nu_p(L))=\begin{cases}(e_2+1,e_2+1)&\text{if}\ -p^{e_2}\epsilon_1\epsilon_2\in \qpds ,\\
(e_3+1,e_3+1)&\text{if}\ -p^{e_2}\epsilon_1\epsilon_2\not\in \qpds \ \text{and}\ e_2\equiv 0\Mod 2,\\
(e_3,e_3+1)&\text{if}\ -p^{e_2}\epsilon_1\epsilon_2\not\in \qpds \ \text{and}\ e_2\equiv 1\Mod 2.
\end{cases}
$$
The lemma follows directly from this.
\end{proof}

Let $L$ be a primitive $\z_2$-lattice with rank greater than or equal to $4$.
We say {\it $L$ is of type A} if there is a Jordan decomposition
$$
L=\ell_1 \perp \ell_2 \perp \cdots \perp \ell_k \ \ \text{with} \ \  \z_2=\mathfrak s(\ell_1) \supsetneq \mathfrak s(\ell_2) \supsetneq \dots \supsetneq \mathfrak s(\ell_k),
$$
and a positive integer $t$  such that
\begin{equation} \label{atype}
\text{$\ell_1\perp \ell_2\perp \cdots \perp \ell_{t-1}$ is an anisotropic ternary $\z_2$-lattice and}  \ \  
\mathfrak{n}\ell_t \subseteq 8\mathfrak{s}\ell_{t-1}.
\end{equation}  
Note that the former condition of \eqref{atype} depends on a Jordan decomposition of the $\z_2$-lattice $L$, in general.  For example, if $L=\langle 1,3,4\rangle$, then the first Jordan component $\ell_1=\langle1,3\rangle$ is anisotropic. However, since $L\simeq \langle 1,7,20\rangle$, the first Jordan component $\ell_1=\langle 1,7\rangle$ in this decomposition is isotropic.   Note that the latter condition in \eqref{atype}  guarantees that the former condition is independent of a Jordan decomposition of $L$.  

\begin{lem} \label{lemnu2}
Let $L$ be a primitive $\z_2$-lattice with rank greater than or equal to $4$.
If $\nu'_2(L)+3<\nu_2(L)$, then $L$ is of type A.
\end{lem}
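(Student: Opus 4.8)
\emph{Proof proposal.} The plan is to prove the contrapositive: if $L$ is not of type A, then $\nu_2(L)\le\nu'_2(L)+3$. Fix a Jordan splitting $L=\ell_1\perp\ell_2\perp\cdots\perp\ell_k$ with $\mathfrak s\ell_i=2^{r_i}\z_2$ and $0=r_1<r_2<\cdots<r_k$, which is possible since $L$ is primitive. By Lemma \ref{lemnunuprime}(i),(ii) the numbers $\nu_2(L)$ and $\nu'_2(L)$ are, respectively, the largest and the second largest of the eight quantities $\nu_{2,s}(L)=\min\{\ord_2(a):a\ra L,\ a\ \text{in the square class of}\ s\}$ for $s\in SC_2$; hence the negation of the desired inequality says that exactly one square class $[s_0]$ attains $\nu_{2,s_0}(L)=\nu_2(L)$, while each of the other seven satisfies $\nu_{2,s}(L)\le\nu_2(L)-4$.

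The tool throughout is the local representation criterion of \cite{OM2}, as already used in Lemmas \ref{lemnuodd} and \ref{newt}. Two facts drive the argument. First, a representation of a $2$-adic integer of $2$-order $m$ essentially only sees the Jordan components of $L$ of scale at most $2^m$, up to a bounded shift; so $\nu_{2,s}(L)$ is governed by a ``bottom block'' of $L$. Second, over $\q_2$ a quadratic space of rank $\ge4$ is universal, an anisotropic ternary space omits exactly one square class, and a space of rank $\le2$ which is not a hyperbolic plane omits at least four of them. Combining these, the only way one of the eight numbers $\nu_{2,s}(L)$ can exceed the others by more than $3$ is for the bottom block of $L$ to be an anisotropic ternary lattice $K=\ell_1\perp\cdots\perp\ell_{t-1}$, which omits $[s_0]=[c_K]$ over $\q_2$ for good, followed by a component $\ell_t$ too deep to restore that class at a nearby $2$-order; the quantitative form of ``too deep'' is precisely $\mathfrak n\ell_t\subseteq8\mathfrak s\ell_{t-1}$, with the $8=2^3$ in \eqref{atype} reflecting $1+8\z_2\subseteq(\z_2^\times)^2$.

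Accordingly I would split into cases. If no initial segment $\ell_1\perp\cdots\perp\ell_j$ has rank $3$, then the first one of rank $\ge3$ has rank $\ge4$ and is rationally universal; if such a segment $K$ exists and $\q_2K$ is isotropic, it is again rationally universal; in both cases the local criterion, applied at the scale where this rich bottom block switches on, keeps the eight numbers $\nu_{2,s}(L)$ inside an interval of length at most $3$. In the remaining case $K=\ell_1\perp\cdots\perp\ell_{t-1}$ is anisotropic ternary, and, since $L$ is not of type A, $\mathfrak n\ell_t\not\subseteq8\mathfrak s\ell_{t-1}$. This forces $r_t\le r_{t-1}+2$ and yields a vector $v\in\ell_t$ with $\ord_2Q(v)\le r_{t-1}+2$; writing $Q(v)=2^j\delta$, $\delta\in\z_2^\times$ (so $j>r_{t-1}$, whence the last Jordan component of $K$ has scale containing $2^j\z_2$), Lemma \ref{newt} applied to the rank-$4$ sublattice $K\perp\z_2 v\simeq K\perp\langle2^j\delta\rangle$ and Lemma \ref{lemnunuprime}(iii) give $\nu_2(L)\le\nu_2\bigl(K\perp\langle2^j\delta\rangle\bigr)\le j+1\le r_{t-1}+3$. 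On the other side, representations of $2$-order below $r_{t-1}$ can involve only $\ell_1\perp\cdots\perp\ell_{t-2}$, which has rank $\le2$ and is anisotropic (an isotropic binary inside $K$ would make $K$ isotropic), and therefore misses at least four square classes; this bounds $\nu'_2(L)$ from below in terms of $r_{t-1}$, and combining the two estimates yields $\nu_2(L)\le\nu'_2(L)+3$.

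The step I expect to be the main obstacle is this last comparison. The crude estimates above only give something like $\nu_2(L)-\nu'_2(L)\le5$; squeezing the constant down to $3$ demands a genuinely careful $2$-adic analysis, split according to the possible shapes of the bottom of $L$ --- keeping track of the cross terms $B(w,v)$ for $w\in K$, $v\in\ell_t$, the possible factor-$2$ gap between $\mathfrak n$ and $\mathfrak s$ of a $\z_2$-lattice, the parities of $r_{t-1}$ and of the $2$-orders in play, and square-class arithmetic modulo $8$ --- and the same care is needed to nail the constant in the first two cases. None of this requires an idea beyond Theorem 1 of \cite{OM2} together with Lemmas \ref{lemnunuprime} and \ref{newt}; the work is entirely in the bookkeeping.
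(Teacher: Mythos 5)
Your overall strategy coincides with the paper's: prove the statement by analyzing Jordan splittings of $L$ via the local representation theorem of \cite{OM2}, and isolate the anisotropic ternary bottom block as the only configuration that can produce a large gap between $\nu_2(L)$ and $\nu'_2(L)$. The structural picture you paint is correct. But as written the proposal has a genuine gap, and you say so yourself: your estimates only deliver $\nu_2(L)-\nu'_2(L)\le 5$, and you defer the reduction to $3$ to unexecuted ``bookkeeping.'' That bookkeeping is not a routine afterthought --- it is the entire content of the lemma. The paper's proof runs through seven explicit Jordan-type cases ($\ani\perp 2^{e_3}\both\perp\ell$, $\langle\epsilon_1,2^{e_2}\epsilon_2\rangle\perp 2^{e_3}\both\perp\ell$, $\langle\epsilon\rangle\perp 2^{e_2}\hyper\perp\ell$, $\langle\epsilon\rangle\perp 2^{e_2}\ani\perp\ell$, $\ani\perp\langle 2^{e_2}\epsilon\rangle\perp\ell$, and the two diagonal ternary-bottom cases), and in each one the bound $\nu_2(L)\le\nu'_2(L)+3$ (or a contradiction with the hypothesis) comes from computing both quantities to within one or two units, not from the generic counting of missed square classes that you invoke.

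Two specific points in your sketch would fail or need real repair. First, the claim that ``representations of $2$-order below $r_{t-1}$ can involve only $\ell_1\perp\cdots\perp\ell_{t-2}$'' is false as stated: a unit can be represented by $\langle 1,2\rangle$ but not by $\langle 1\rangle$, so deeper Jordan components do affect which small square classes are hit. The correct comparison in the paper is between $L$ and the full anisotropic ternary $M=\langle\epsilon_1,2^{e_2}\epsilon_2,2^{e_3}\epsilon_3\rangle$, and even the inequality $\nu'_2(M)\le\nu'_2(L)$ is not automatic (only $\nu'_2(L)\le\nu'_2(M)$ is, by Lemma \ref{lemnunuprime}(iii)); the paper has to justify it by showing $\vert\{s\in SC_2:2^{\nu'_2(M)-2}s\ra L\}\vert=\vert\{s\in SC_2:2^{\nu'_2(M)-2}s\ra M\}\vert\le 6$, which requires knowing how deep $\ell$ sits. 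Second, in the hardest sub-case ($L$ not of type A with anisotropic ternary bottom and $e_4\le e_3+2$) the decisive step is to show that the component $\ell$ actively restores the missing square class at a low order --- e.g.\ that $4\gamma-2^{e_4}\delta'$ or $4\gamma-2^{e_4+2}\delta'$ lies in $Q(M)$, whence $4\gamma\in Q(L)$ --- by exploiting $\{3\cdot 4\gamma,5\cdot 4\gamma,7\cdot 4\gamma\}\subset Q(M)$. Your sketch uses $\ell$ only through a single vector $v$ fed into Lemma \ref{newt}, which is exactly what costs you the extra $2$ in the constant. Until the case analysis is actually carried out, the lemma is not proved.
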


\begin{proof}
First of all, note that
\begin{equation} \label{eqlemnu2}
\nu_2(L)\le 4\le \nu'_2(L)+3,\ \ \text{if}\ \ 8\z_2\subset Q(L).
\end{equation}
Therefore we may further assume that $8\z_2\not\subset Q(L)$. From now on, we determine all possible Jordan decompositions of the $\z_2$-lattice  $L$ which satisfy the condition given in the lemma. 

First, assume that
$$
L\simeq \ani \perp 2^{e_3}\both \perp \ell,
$$
where $0\le e_3$, $\both=\hyper$ or $\ani$, and $\mathfrak{n}\ell \subseteq 2^{e_3+1}\z_2$.
Then one may easily check that
$$
Q(L)=\begin{cases}\{ \gamma \in \z_2 : \ord_2(\gamma)\equiv 1\Mod 2 \ \text{or}\ \ord_2(\gamma)\ge e_3\}&\text{if}\ \ e_3\equiv 1\Mod 2,\\
\{ \gamma \in \z_2 : \ord_2(\gamma)\equiv 1\Mod 2 \ \text{or}\ \ord_2(\gamma)\ge e_3+1\}&\text{if}\ \ e_3\equiv 0\Mod 2 .\end{cases}
$$
Hence we have $\nu_2(L)=\nu'_2(L)\in \{ e_3+1,e_3+2\}$ in this case.

Assume that
$$
L\simeq \langle \epsilon_1,2^{e_2}\epsilon_2\rangle \perp 2^{e_3}\both \perp \ell,
$$
where $\epsilon_i\in \z_2^{\times}$, $0\le e_2< e_3$, $\both=\hyper$ or $\ani$, and $\mathfrak{n}\ell \subseteq 2^{e_3+1}\z_2$.
One may easily check that $8\z_2 \subset Q(L)$ when $e_3=1$.
Thus we may assume that $e_3$ is greater than $1$.  If $2^{e_2}\epsilon_1\epsilon_2\in 7\qtwods$, then one may easily check that
$$
\vert \{ s\in \{ 1,3,5,7 \} : 2^{e_2+1}s\ra L\} \vert =0 \  \ \text{and} \ \ 
2^{e_2+2} \z_2 \subset Q(\langle \epsilon_1,2^{e_2}\epsilon_2\rangle) \subset Q(L).
$$
Thus we have $\nu'_2(L)=\nu_2(L)=e_2+3$.
If $2^{e_2}\epsilon_1\epsilon_2\in 3\qtwods$, then one may easily check that
$$
\vert \{ s\in \{1,3,5,7\} : 2^{e_3}s\ra L\} \vert =0 \ \ \text{and} \ \ 2^{e_3+1}\z_2\subset Q(L)
$$
when $e_3\equiv 1\Mod 2$, and
$$
\vert \{ s\in \{1,3,5,7\} : 2^{e_3-1}s\ra L\} \vert =0 \ \ \text{and} \ \ 2^{e_3}\z_2\subset Q(L)
$$
when $e_3\equiv 0\Mod 2$.
Hence we have $\nu_2(L)=\nu'_2(L)\in \{ e_3+1,e_3+2\}$.  Finally, if $2^{e_2}\epsilon_1\epsilon_2\not\in 3\qtwods \cup 7\qtwods$, then one may easily check that
$$
\vert \{ s\in \{ 1,3,5,7 \} : 2^{e_3-2}s\ra L\} \vert \le 2.
$$
From this follows that $e_3\le \nu'_2(L)$. To show that $2^{e_3}\hyper \ra L$,
we may assume that $\both=\ani$.  Since $2^{e_3+1}\epsilon \ra \langle \epsilon_1,2^{e_2}\epsilon_2 \rangle$ for some $\epsilon \in \z_2^{\times}$, we have
$$
\langle 5\cdot 2^{e_3+1}\epsilon \rangle \perp 2^{e_3}\hyper \simeq \langle 2^{e_3+1}\epsilon \rangle \perp 2^{e_3}\ani \ra L.
$$
Hence we have $2^{e_3+1}\z_2 \subset Q(L)$, and  $\nu_2(L)\le e_3+2$.
Since $e_3\le \nu'_2(L)$, we have
$$
\nu_2(L)\le e_3+2\le \nu'_2(L)+2.
$$

Assume that
$$
L\simeq \langle \epsilon \rangle \perp 2^{e_2}\hyper \perp \ell,
$$
where $\epsilon \in \z_2^{\times}$, $1\le e_2$, and $\mathfrak{n}\ell \subseteq 2^{e_2+1}\z_2$.
Clearly, we have $8\z_2 \subset Q(L)$ if $e_2$ is $1$ or $2$. 
Thus we may assume that $3\le e_2$.
If $e_2\equiv 1\Mod 2$,  then one may easily check that
$$
\vert \{ s\in \{1,3,5,7\} : 2^{e_2}s\ra L\} \vert =0\ \ \text{and}\ \ 2^{e_2+1}\z_2\subset Q(L),
$$
and if $e_2\equiv 0\Mod 2$, then 
$$
\vert \{ s\in \{1,3,5,7\} : 2^{e_2-1}s\ra L\} \vert =0\ \ \text{and}\ \ 2^{e_2}\z_2\subset Q(L).
$$
Hence we have $\nu_2(L)=\nu'_2(L)\in \{ e_2+1,e_2+2\}$.

Asssume that
$$
L\simeq \langle \epsilon \rangle \perp 2^{e_2}\ani \perp \ell,
$$
where $\epsilon \in \z_2^{\times}$, $1\le e_2$, and $\mathfrak{n}\ell \subseteq 2^{e_2+1}\z_2$.  First, assume that $e_2$ is odd. 
Then one may easily check that
$$
\vert \{ s\in \{1,3,5,7\} : 2^{e_2}s\ra L\} \vert =0\ \ \text{and}\ \ 2^{e_2+1}\z_2\subset Q(L).
$$
Thus we have $\nu_2(L)=\nu'_2(L)=e_2+2$. Now, assume that $e_2$ is even. Then the ternary $\z_2$-lattice $\langle \epsilon \rangle \perp 2^{e_2}\ani$ is anisotropic. 
Suppose that $L$ is not of type A, that is, $\ord_2(\mathfrak{n}\ell) \in \{e_2+1,e_2+2\}$.
Let $e_3=\ord_2(\mathfrak{n}\ell)$ and let $\delta$ be an element in $\z_2^{\times}$ satisfying $2^{e_3}\delta \ra \ell$.
Since
$$
\vert \{ s\in \{1,3,5,7\} : 2^{e_2-1}s\ra L\} \vert =0,
$$
we have $e_2+1\le \nu'_2(L)$. Since  $1\le e_3-e_2\le 2$, we have $Q(\langle \epsilon \rangle \perp \ani \perp \langle 2^{e_3-e_2}\delta \rangle)=\z_2$. Furthermore, since  $L$ contains a sublattice
$$
2^{e_2}(\langle \epsilon \rangle \perp \ani \perp \langle 2^{e_3-e_2}\delta \rangle),
$$
we have $2^{e_2}\z_2 \subset Q(L)$ and  $\nu_2(L)\le e_2+1$.
This implies that
$$
e_2+1\le \nu'_2(L)\le \nu_2(L)\le e_2+1,
$$
which is a contradiction.

Assume that 
$$
L\simeq \ani \perp \langle 2^{e_2}\epsilon \rangle \perp \ell,
$$
where $\epsilon \in \z_2^{\times}$, $1\le e_2$, and $\mathfrak{s}\ell \subseteq 2^{e_2}\z_2$.
If $e_2$ is odd, then one may easily check that
$$
\vert \{ s\in \{1,3,5,7\} : 2^{e_2-1}s\ra L\} \vert =0\ \ \text{and}\ \ 2^{e_2}\z_2\subset Q(L).
$$
Thus we have $\nu_2(L)=\nu'_2(L)=e_2+1$. Assume that $e_2$ is even. Then the ternary $\z_2$-lattice $\ani \perp \langle 2^{e_2}\epsilon \rangle$ is anisotropic.  Suppose that $L$ is not of type A, that is, 
$e_2\le \ord_2(\mathfrak{s}\ell) \le \ord_2(\mathfrak{n}\ell) \le e_2+2$.
Let $e_3=\ord_2(\mathfrak{n}\ell)$ and let $\delta$ be an element in $\z_2^{\times}$ satisfying $2^{e_3}\delta \ra \ell$.
Since
$$
\vert \{ s\in \{1,3,5,7\} : 2^{e_2-2}s\ra L\} \vert =0,
$$
we have $e_2\le \nu'_2(L)$. Since $2 \le e_3-e_2+2\le 4$, we have $Q(\ani \perp \langle 4\epsilon \rangle \perp \langle 2^{e_3-e_2+2}\delta \rangle)=2\z_2$. 
Furthermore, since $L$ contains a sublattice isometric to
$$
2^{e_2-2}( \ani \perp \langle 4\epsilon \rangle \perp \langle 2^{e_3-e_2+2}\delta \rangle),
$$
we have  $2^{e_2-1}\z_2 \subset Q(L)$ and thus $e_2\le \nu'_2(L)\le\nu_2(L)\le e_2$.
This is a contradiction.

Assume  that
$$
L\simeq \langle \epsilon_1,2^{e_2}\epsilon_2,2^{e_3}\epsilon_3\rangle \perp \ell,
$$
where $\epsilon_i\in \z_2^{\times}$, $0\le e_2\le e_3$, $\nu_2(\langle \epsilon_1,2^{e_2}\epsilon_2,2^{e_3}\epsilon_3\rangle)<\infty$, and $\mathfrak{s}\ell \subseteq 2^{e_3}\z_2$.
First, assume  that $2^{e_2}\epsilon_1\epsilon_2\in 7\qtwods$.
Then one may easily check that
$$
\vert \{ s\in \{1,3,5,7\} : 2^{e_2-1}s\ra L\} \vert=0\ \ \text{and}\ \ 2^{e_2+2}\z_2\subset Q(L).
$$
Hence we have $e_2+1\le \nu'_2(L)$ and $\nu_2(L)\le e_2+3$.
From this follows that $\nu_2(L)\le \nu'_2(L)+2$.
Thus we may further assume that $2^{e_2}\epsilon_1\epsilon_2\not\in 7\qtwods$.  Assume that $2^{e_2}\epsilon_1\epsilon_2\in 3\qtwods$.
Since we are assuming that the ternary sublattice $\langle \epsilon_1,2^{e_2}\epsilon_2,2^{e_3}\epsilon_3\rangle$ of $L$ is isotropic, we have $e_3\equiv 0\Mod 2$.   Now, one may easily check that
$$
\vert \{ s\in \{1,3,5,7\} : 2^{e_3-1}s\ra L\} \vert=0.
$$
 If $2^{e_2}\epsilon_1\epsilon_2\not\in 3\qtwods \cup 7\qtwods$, then we have
$$
\vert \{ s\in \{1,3,5,7\} : 2^{e_3-3}s\ra L\} \vert \le 2.
$$
From these follows that $e_3-1\le \nu'_2(L)$.
Note that $\langle \epsilon_1,2^{e_2}\epsilon_2,2^{e_3}\epsilon_3\rangle$ contains a sublattice isometric to either
$2^{e_3-1}\langle \epsilon_1,2^{\delta_2}\epsilon_2,2^{\delta_3}\epsilon_3\rangle$ or  $2^{e_3}\langle \epsilon_1,\epsilon_2,\epsilon_3\rangle$,
where $\delta_2, \delta_3 \in \{0,1\}$.
By using  Theorem 3 of \cite{OM2}, one may easily check that
$$
2\z_2\subset Q(\langle \epsilon_1,2^{\delta_2}\epsilon_2,2^{\delta_3}\epsilon_3\rangle )\ \ \text{and}\ \ Q(\langle \epsilon_1,\epsilon_2,\epsilon_3\rangle)=\z_2.
$$
Hence we have  $2^{e_3}\z_2\subset Q(L)$ and thus  $\nu_2(L)\le e_3+1$.
Consequently, we have
$$
\nu_2(L)\le e_3+1\le \nu'_2(L)+2.
$$

Finally, assume that
$$
L\simeq \langle \epsilon_1,2^{e_2}\epsilon_2,2^{e_3}\epsilon_3\rangle \perp \ell,
$$
where $\epsilon_i\in \z_2^{\times}$, $0\le e_2\le e_3$, $\nu_2(\langle \epsilon_1,2^{e_2}\epsilon_2,2^{e_3}\epsilon_3\rangle)=\infty$. Suppose that $L$ is not of type A.  Then we may assume that
$$
e_3\le \ord_2(\mathfrak{s}\ell) \le \ord_2(\mathfrak{n}\ell)\le e_3+2.
$$
Let $M=\langle \epsilon_1,2^{e_2}\epsilon_2,2^{e_3}\epsilon_3\rangle$ and let $\gamma$ be the unique element in $\{ 2^{\nu'_2(M)-1}s : s\in SC_2\}$ satisfying $\gamma \nra M$.
Let $e_4=\ord_2(\mathfrak{n}\ell)$ and let $\delta' \in \z_2^{\times}$ be an element such that  $2^{e_4}\delta' \ra \ell$.
One may easily check that
$$
\begin{cases} 
\vert \{ s\in \{1,3,5,7\} : 2^{e_3-2}s \ra M \} \vert =0 \quad &\text{if $2^{e_2}\epsilon_1\epsilon_2\in 3\qtwods$ and $e_3\equiv 1\Mod 2$},\\ 
\vert \{ s\in \{1,3,5,7\} : 2^{e_3-1}s \ra M \} \vert =0 \quad &\text{if $2^{e_2}\epsilon_1\epsilon_2\in 3\qtwods$ and  $e_3\equiv 0\Mod 2$},  \\
\vert \{ s\in \{1,3,5,7\} : 2^{e_3-3}s \ra M \} \vert \le 2 \quad &\text{if $2^{e_2}\epsilon_1\epsilon_2\not\in 3\qtwods \cup 7\qtwods$}. \end{cases}
$$
From this follows that $e_3-1\le \nu'_2(M)$. On the other hand, $M$ contains a sublattice isometric to either
$2^{e_3-1}\langle \epsilon_1,2^{\delta_2}\epsilon_2,2^{\delta_3}\epsilon_3\rangle$ or  $2^{e_3}\langle \epsilon_1,\epsilon_2,\epsilon_3\rangle$,
where $\delta_i\in \{0,1\}$.
Now, by using  Theorem 3 of \cite{OM2}, one may easily show that
$\nu'_2(\langle \epsilon_1,2^{\delta_2}\epsilon_2,2^{\delta_3}\epsilon_3\rangle)=1$.
From this follows that $\nu'_2(M)\le e_3+1$.
Hence we have
\begin{equation} \label{eqlemnu22}
e_3-1\le \nu'_2(M)\le e_3+1.
\end{equation}
Now, assume that $\nu'_2(M)+2\le e_4$.
Since
$$
\vert \{ s\in SC_2 : 2^{\nu'_2(M)-2}s\ra L \} \vert =\vert \{ s\in SC_2 : 2^{\nu'_2(M)-2}s\ra M \} \vert \le 6,
$$
we have $\nu'_2(M)\le \nu'_2(L)$.
Thus $\nu'_2(M)=\nu'_2(L)$ by Lemma \ref{lemnunuprime}(iii).
On the other hand, since $e_3-1\le \nu'_2(M)$, we have
$$
\nu'_2(M)+2\le e_4\le e_3+2\le \nu'_2(M)+3.
$$
From this and the fact that
$$
\{ 3\cdot 4\gamma, 5\cdot 4\gamma, 7\cdot 4\gamma \} \subset Q(M),
$$
one may easily show that either $4\gamma-2^{e_4}\delta'\in Q(M)$ or $4\gamma-2^{e_4+2}\delta'\in Q(M)$.
Hence we have $4\gamma \in Q(L)$ and it follows that
$$
\nu_2(L)\le \nu'_2(M)+2=\nu'_2(L)+2.
$$
Finally, assume  that $e_4\le \nu'_2(M)+1$.
Then there is a nonnegative integer $g$ such that
$$
\ord_2(4^g 2^{e_4}\delta')\in \{ \nu'_2(M),\nu'_2(M)+1\}.
$$
Since
$\gamma-4^g 2^{e_4}\delta' \ra M\ \ \text{or}\ \ \gamma-4^{g+1} 2^{e_4}\delta' \ra M$,
 we have $\gamma \ra L$.
Hence $\nu_2(L)\le \nu'_2(M)$.
Furthermore, since $\nu'_2(M)-4\le e_3-3\le e_4-3$ by \eqref{eqlemnu22}, it follows that
$$
\vert \{ s\in SC_2 : 2^{\nu'_2(M)-5}s\ra L \} \vert =\vert \{ s\in SC_2 : 2^{\nu'_2(M)-5}s\ra M \} \vert \le 6.
$$
Thus we have $\nu'_2(M)-3\le \nu'_2(L)$.
From the inequalities
$$
\nu'_2(M)-3\le \nu'_2(L)\le \nu_2(L)\le \nu'_2(M),
$$
we have $\nu_2(L)\le \nu'_2(L)+3$.  This completes the proof.
\end{proof}

\begin{lem} \label{lemanisoodd}
Let $p$ an odd prime and let $L$ be a $\z_p$-lattice of rank $k$ with $k\ge 4$ such that $L\simeq \langle \epsilon_1,p^{e_2}\epsilon_2,p^{e_3}\epsilon_3,\dots,p^{e_k}\epsilon_k\rangle$, where $0\le e_2\le e_3\le \cdots \le e_k$ and $\epsilon_i\in \z_p^{\times}$.
If $\nu_p(\langle \epsilon_1,p^{e_2}\epsilon_2,p^{e_3}\epsilon_3\rangle)=\infty$, then $B(\mathbf{x},\mathbf{y})\in {p^{\nu_p(L)-1}}\z_p$ for any vectors $\mathbf{x},\mathbf{y}\in L$ with $\ord_p(Q(\mathbf{x}))=\ord_p(Q(\mathbf{y}))=\nu_p(L)$.
\end{lem}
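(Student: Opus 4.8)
The plan is to argue by contradiction, using only the extremal characterisation of $\nu_p(L)$ from Lemma \ref{lemnunuprime}(i) together with the structure of the binary sublattice spanned by $\mathbf x$ and $\mathbf y$. Write $\nu=\nu_p(L)$. If $\nu=1$ then $Q(L)=\z_p$ and there is nothing to prove, so assume $\nu\ge 2$.

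First I would record what the value of $\nu$ forces. By Lemma \ref{lemnunuprime}(i) we have $p^{\nu-1}s\ra L$ for every $s\in SC_p$, while $p^{\nu-2}s_0\nra L$ for some $s_0\in SC_p$. Scaling by an arbitrary unit a vector representing $p^{\nu-1}$, resp. $p^{\nu-1}\Delta_p$, $p^{\nu}=p^{\nu-1}\cdot p$, $p^{\nu}\Delta_p$, shows $p^{\nu-1}\z_p^\times\cup p^{\nu}\z_p^\times\subseteq Q(L)$, and scaling vectors by powers of $p$ then yields $p^{\nu-1}\z_p\subseteq Q(L)$. Since $p^{\nu-2}s_0\nra L$, the element $\gamma_0:=p^{\nu-2}s_0$ cannot lie in $p^{\nu-1}\z_p$, so $\ord_p\gamma_0=\nu-2$ (equivalently $s_0\in\{1,\Delta_p\}$); in particular $p^{\nu-2}\z_p\not\subseteq Q(L)$.

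Next, suppose for contradiction that $\beta:=\ord_p B(\mathbf x,\mathbf y)\le\nu-2$. If $\mathbf x,\mathbf y$ are $\q_p$-dependent, say $\mathbf y=c\mathbf x$, then $\ord_p Q(\mathbf y)=2\ord_p c+\nu=\nu$ forces $c\in\z_p^\times$, whence $\ord_p B(\mathbf x,\mathbf y)=\ord_p(cQ(\mathbf x))=\nu$, contradicting $\beta\le\nu-2$. So $\mathbf x,\mathbf y$ are independent and $M:=\z_p\mathbf x+\z_p\mathbf y$ is a binary $\z_p$-lattice whose Gram matrix in the basis $\{\mathbf x,\mathbf y\}$ has diagonal entries of order $\nu$ and off-diagonal entry of order $\beta<\nu$. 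Hence $\mathfrak s M=p^{\beta}\z_p$ and $\ord_p(dM)=2\beta$, so $M$ is isometric to $p^{\beta}N$ for a unimodular binary $\z_p$-lattice $N$. Moreover $dN=p^{-2\beta}(Q(\mathbf x)Q(\mathbf y)-B(\mathbf x,\mathbf y)^2)=-\bigl(B(\mathbf x,\mathbf y)/p^{\beta}\bigr)^2\bigl(1-p^{2(\nu-\beta)}w\bigr)$ for some $w\in\z_p$, and since $p$ is odd and $2(\nu-\beta)\ge 4$, the factor $1-p^{2(\nu-\beta)}w$ is a square in $\z_p^\times$; thus $-dN\in(\z_p^\times)^2$. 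Therefore $N$ is isotropic, hence $Q(N)=\z_p$ and $Q(M)=p^{\beta}\z_p$. But then $\gamma_0\in p^{\beta}\z_p=Q(M)\subseteq Q(L)$, contradicting $\gamma_0\nra L$. This forces $\ord_p B(\mathbf x,\mathbf y)\ge\nu-1$, which is the assertion.

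The one genuinely delicate point is the sharp non-representation $p^{\nu-2}\z_p\not\subseteq Q(L)$ in the second paragraph, specifically the exclusion of the possibility that the missing square class $s_0$ sits at valuation $\nu-1$ rather than $\nu-2$ — which is precisely what the inclusion $p^{\nu-1}\z_p\subseteq Q(L)$ forbids; I expect this, rather than the essentially formal computation with the binary overlattice, to be where care is needed. I should add that the anisotropy hypothesis on $\langle\epsilon_1,p^{e_2}\epsilon_2,p^{e_3}\epsilon_3\rangle$ is not used in this argument: it marks the regime in which the lemma is actually needed, since when that ternary sublattice is isotropic Lemma \ref{lemnuodd} already supplies the stronger conclusion $\nu_p(L)\le\nu'_p(L)+1$. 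If one prefers an argument that keeps the anisotropy explicit, the inclusion $p^{\nu-1}\z_p\subseteq Q(L)$ and its sharpness can instead be read off from a Jordan splitting of $L$ adapted to the anisotropic ternary sublattice, in the style of the proofs of Lemmas \ref{lemnuodd} and \ref{lemnu2}.
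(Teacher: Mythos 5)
Your argument is correct, but it reaches the contradiction by a different mechanism than the paper. Both proofs reduce to the same configuration: if $\ord_p B(\mathbf x,\mathbf y)=\beta\le\nu_p(L)-2$, the binary sublattice $M=\z_p\mathbf x+\z_p\mathbf y$ is forced to be $p^{\beta}\hyper\simeq p^{\beta}\langle 1,-1\rangle$. The paper then brings in the Jordan structure: by Lemma \ref{newt} (whose lower bound $e_4\le\nu_p(L)$ is exactly where the anisotropy of $\langle\epsilon_1,p^{e_2}\epsilon_2,p^{e_3}\epsilon_3\rangle$ is used) one gets $\beta\le e_4-1$, so Theorem 1 of \cite{OM2} forces $\q_pM\ra\q_p\mathbf v_1+\q_p\mathbf v_2+\q_p\mathbf v_3$, which is impossible since an isotropic binary space cannot embed in an anisotropic ternary one. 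You instead extract the contradiction directly from the definition of $\nu_p(L)$: the sublattice $p^{\beta}\hyper$ would give $p^{\beta}\z_p\subseteq Q(L)$, while by minimality some element of order exactly $\nu_p(L)-2$ (your $\gamma_0=p^{2u-2}s_0$, necessarily of that order because $p^{\nu_p(L)-1}\z_p\subseteq Q(L)$) is not represented. Your verification that $-dN\in(\z_p^\times)^2$ via $1+p\z_p\subseteq(\z_p^\times)^2$ is the only place oddness of $p$ enters, and your observation that the anisotropy hypothesis is never needed is accurate: your version of the lemma holds for every $\z_p$-lattice of rank at least $4$ with $p$ odd. What the paper's route buys is uniformity of method with the $2$-adic companion Lemma \ref{lemaniso2} (where the type-A/anisotropy structure genuinely matters); what yours buys is a shorter, self-contained argument avoiding both Lemma \ref{newt} and the citation to \cite{OM2}, at the modest cost of having to justify the sharpness statement $p^{\nu_p(L)-2}\z_p\not\subseteq Q(L)$, which you do correctly.
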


\begin{proof}
By Lemma \ref{newt}, we have $e_4 \le \nu_p(L)\le e_4+1$.
Let $\{ \mathbf{v}_1,\mathbf{v}_2,\dots,\mathbf{v}_k\}$ be a $\z_p$-basis for $L$ such that
$$
L=\z_p\mathbf{v}_1+\z_p\mathbf{v}_2+\cdots+\z_p\mathbf{v}_k \simeq \langle \epsilon_1,p^{e_2}\epsilon_2,p^{e_3}\epsilon_3,\dots,p^{e_k}\epsilon_k\rangle.
$$
Suppose that  there are vectors $\mathbf{x}, \mathbf{y}\in L$ with $\ord_p(Q(\mathbf{x}))=\ord_p(Q(\mathbf{y}))=\nu_p(L)$ such that $B(\mathbf{x},\mathbf{y})\not\in p^{\nu_p(L)-1}\z_p$.
Then we may write
$$
Q(\mathbf{x})=p^{\nu_p(L)}a,\ \ Q(\mathbf{y})=p^{\nu_p(L)}b,\ \ \text{and} \ \ B(\mathbf{x},\mathbf{y})=p^rc,
$$
where $a,b,c\in \z_p^{\times}$ and $0\le r\le \nu_p(L)-2$.
Since $\nu_p(L)\in \{e_4,e_4+1\}$, we have
$$
r\le \nu_p(L)-2\le e_4-1.
$$
Let $M=\z_p \mathbf{x}+\z_p \mathbf{y}$ be the $\z_p$-sublattice of $L$.
Since $r\le e_4-1$, we have 
$$
\q_pM\ra \q_p\mathbf{v}_1+\q_p\mathbf{v}_2+\q_p\mathbf{v}_3
$$
by Theorem 1 of \cite{OM2}.
Since  $M\simeq p^r\langle 1,-1\rangle$ is isotropic, it can not be represented by the anisotropic quadratic $\q_p$-space $\q_p\mathbf{v}_1+\q_p\mathbf{v}_2+\q_p\mathbf{v}_3$.
This completes the proof.
\end{proof}

\begin{lem} \label{lemaniso2}
Let $L$ be a primitive $\z_2$-lattice with rank greater than or equal to $4$.
If $L$ is of type A, then we have $B(\mathbf{x},\mathbf{y})\in 2^{\nu_2(L)-2}\z_2$ for any vectors $\mathbf{x},\mathbf{y}\in L$ with $\ord_2(Q(\mathbf{x}))=\ord_2(Q(\mathbf{y}))=\nu_2(L)$.
\end{lem}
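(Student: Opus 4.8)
The plan is to argue by contradiction, in the spirit of the proof of Lemma~\ref{lemanisoodd}. Write $\nu=\nu_2(L)$ and suppose there exist $\mathbf{x},\mathbf{y}\in L$ with $\ord_2(Q(\mathbf{x}))=\ord_2(Q(\mathbf{y}))=\nu$ and $r:=\ord_2(B(\mathbf{x},\mathbf{y}))\le\nu-3$. The first task is to identify the binary sublattice $M=\z_2\mathbf{x}+\z_2\mathbf{y}$. Writing $Q(\mathbf{x})=2^{\nu}a$, $Q(\mathbf{y})=2^{\nu}b$ and $B(\mathbf{x},\mathbf{y})=2^{r}c$ with $a,b,c\in\z_2^{\times}$, one has $dM=2^{2r}\bigl(2^{2(\nu-r)}ab-c^{2}\bigr)$, where the second factor is a unit $\equiv-c^{2}\equiv-1\pmod 8$ because $\nu-r\ge3$. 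Since moreover every value of $Q$ on $M$ has $2$-order at least $r+1$, the lattice $M$ is an even $2^{r}\z_2$-modular binary $\z_2$-lattice; as the only even unimodular binary $\z_2$-lattices are $\hyper$ and $\ani$, whose discriminants $-1$ and $3$ lie in different square classes, we conclude $M\simeq2^{r}\hyper$. In particular $M$ admits a basis $\{\mathbf{e},\mathbf{f}\}$ with $Q(\mathbf{e})=Q(\mathbf{f})=0$ and $B(\mathbf{e},\mathbf{f})=2^{r}$.

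Next I would bound $\nu$ using the type~A structure. Write $L=K\perp N$, where $K=\ell_1\perp\cdots\perp\ell_{t-1}$ is the anisotropic ternary constituent and $N=\ell_{t}\perp\cdots\perp\ell_{k}$; note $N\ne0$ since $\rk(L)\ge4$. Let $2^{f}\z_2=\mathfrak{s}\ell_{t-1}$. From $\mathfrak{n}N=\mathfrak{n}\ell_{t}\subseteq8\mathfrak{s}\ell_{t-1}$ we get $n:=\ord_2(\mathfrak{n}N)\ge f+3$, and writing $\mathfrak{s}N=2^{h}\z_2$ we have $n\le h+1$. Choosing $\mathbf{w}\in N$ with $Q(\mathbf{w})=2^{n}\delta$, $\delta\in\z_2^{\times}$, the rank-$4$ sublattice $K\perp\z_2\mathbf{w}\simeq K\perp\langle2^{n}\delta\rangle$ of $L$ satisfies $2^{n}\z_2\subseteq2^{f}\z_2=\mathfrak{s}\ell_{t-1}$, so Lemma~\ref{newt} gives $\nu_2(K\perp\langle2^{n}\delta\rangle)\le n+1$, and then Lemma~\ref{lemnunuprime}(iii) gives $\nu\le n+1$. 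Hence $r\le\nu-3\le n-2$, so in particular $r<h$ and $n-r\ge2$.

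The crucial step is then to push the isotropic plane $M$ down into $K$. Decompose $\mathbf{e}=\mathbf{e}_K+\mathbf{e}_N$ and $\mathbf{f}=\mathbf{f}_K+\mathbf{f}_N$ along $L=K\perp N$. From $Q(\mathbf{e})=0$ we obtain $Q(\mathbf{e}_K)=-Q(\mathbf{e}_N)\in\mathfrak{n}N=2^{n}\z_2$, and similarly $Q(\mathbf{f}_K)\in2^{n}\z_2$; from $B(\mathbf{e}_N,\mathbf{f}_N)\in\mathfrak{s}N=2^{h}\z_2$ and $r<h$ we obtain $B(\mathbf{e}_K,\mathbf{f}_K)=2^{r}-B(\mathbf{e}_N,\mathbf{f}_N)=2^{r}u$ for some $u\in\z_2^{\times}$. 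Consequently the rank-$2$ sublattice $M_K:=\z_2\mathbf{e}_K+\z_2\mathbf{f}_K$ of $K$ has a Gram matrix with off-diagonal entry $2^{r}u$ and diagonal entries of $2$-order at least $n$, so $dM_K=2^{2r}v$ with $v\equiv-u^{2}\pmod{2^{2(n-r)}}$; since $2(n-r)\ge4$, the unit $v$ lies in the square class of $-1$, and therefore $\q_2 M_K$ is isotropic (a binary $\q_2$-space of discriminant $-1$ is hyperbolic). But $M_K$ is a sublattice of $K$, so $\q_2 K$ is isotropic, contradicting the anisotropy of $K$. This contradiction proves the lemma.

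The main obstacle is the quantitative bookkeeping in this last step: one has to be sure that the $2$-orders of $Q(\mathbf{e}_K)$, $Q(\mathbf{f}_K)$ (at least $n$) and of $B(\mathbf{e}_K,\mathbf{f}_K)$ (exactly $r$) are separated by at least $2$, so that removing the $N$-component cannot deform the hyperbolic plane into the anisotropic form $\ani$; this separation is exactly what the inequality $r\le n-2$ provides, and it is here that Lemma~\ref{newt} — hence the factor $8$ built into the definition of a type~A lattice — is used. The same order estimate also guarantees $dM_K\ne0$, so that $M_K$ is genuinely of rank $2$. (In the spirit of Lemma~\ref{lemanisoodd} one could instead deduce $\q_2 M\ra\q_2 K$ directly from $M\ra L$ by invoking Theorem~3 of \cite{OM2}; the projection argument above is designed to bypass the intricate hypotheses of the $p=2$ representation theorem.)
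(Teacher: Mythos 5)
Your proposal is correct and follows essentially the same route as the paper: assume $r\le\nu_2(L)-3$, identify $M\simeq 2^r\hyper$, use Lemma~\ref{newt} together with Lemma~\ref{lemnunuprime}(iii) to get $\nu_2(L)\le\ord_2(\mathfrak{n}\ell)+1$, and project the hyperbolic plane onto the anisotropic ternary constituent to contradict its anisotropy. The only cosmetic difference is that the paper identifies the projected binary lattice as $2^r\hyper$ and contradicts $\nu_2(\ell')=\infty$ via $2^{r+1}\z_2\subset Q(K)$, while you read off isotropy from the discriminant being $-1$ up to squares; these are the same argument.
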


\begin{proof}
We may  assume that $\nu_2(L)\ge 3$, since otherwise there is nothing to prove.
Since we are assuming that $L$ is of type A, we may write $L=\ell'\perp \ell$ such that $\ell'$ is an anisotropic ternary $\z_2$-lattice and $\mathfrak n(\ell) \subseteq 8\mathfrak a$, where $\mathfrak a$ is the scale of a last Jordan component of $\ell'$ by \eqref{atype}.  Let $e_4=\ord_2(\mathfrak{n}\ell)$ and let $\delta \in \z_2^{\times}$ be an element such that $\langle 2^{e_4}\delta\rangle \ra \ell$.   Since
$$
\ell' \perp \langle 2^{e_4}\delta\rangle \ra L,
$$
we have $\nu_2(L)\le e_4+1$ by Lemmas \ref{lemnunuprime}(iii) and \ref{newt}.

Suppose there are vectors $\mathbf{x},\mathbf{y}\in L$ with $\ord_2(Q(\mathbf{x}))=\ord_2(Q(\mathbf{y}))=\nu_2(L)$ such that $B(\mathbf{x},\mathbf{y})\not\in 2^{\nu_2(L)-2}\z_2$.
Then we may write
$$
Q(\mathbf{x})=2^{\nu_2(L)}a,\ \ Q(\mathbf{y})=2^{\nu_2(L)}b,\ \ B(\mathbf{x},\mathbf{y})=2^rc,
$$
where $a,b,c\in \z_2^{\times}$ and $0\le r\le \nu_2(L)-3$.
Put 
$$
M=\z_2\mathbf{x}+\z_2\mathbf{y}\simeq \begin{pmatrix}2^{\nu_2(L)}a&2^rc\\2^rc&2^{\nu_2(L)}b\end{pmatrix}.
$$
Since $r\le \nu_2(L)-3$, it follows that
$$
M\simeq 2^r\begin{pmatrix}2^{\nu_2(L)-r}a&c\\c&2^{\nu_2(L)-r}b\end{pmatrix} \simeq 2^r\hyper.
$$
Thus we have $2^r\hyper \ra L$.
Hence, there are vectors $\mathbf{z}',\mathbf{w}'\in \ell'$ and $\mathbf{z},\mathbf{w}\in \ell$ such that 
$$
\z_2(\mathbf{z}'+\mathbf{z})+\z_2(\mathbf{w}'+\mathbf{w})\simeq 2^r\begin{pmatrix}0&1\\1&0\end{pmatrix}.
$$
From this and the fact that $\mathfrak{s}\ell \subseteq 2^{e_4-1}\z_2$, one may easily show that
$$
\begin{array}{l}
Q(\mathbf{z}')=-Q(\mathbf{z})\equiv 0\Mod {2^{e_4}},\\[0.3em]
Q(\mathbf{w}')=-Q(\mathbf{w})\equiv 0\Mod {2^{e_4}},\\[0.3em]
B(\mathbf{z}',\mathbf{w}')=2^r-B(\mathbf{z},\mathbf{w})\equiv 2^r\ \ \text{or}\ \ 2^r+2^{e_4-1} \Mod {2^{e_4}}.
\end{array}
$$
Put $K=\z_2\mathbf{z}'+\z_2\mathbf{w}'\subset \ell'$.
Since $\nu_2(L)\le e_4+1$, we have
$$
r\le \nu_2(L)-3\le e_4-2.
$$
From this, one may easily see that
$$
K\simeq \begin{pmatrix}Q(\mathbf{z}')&B(\mathbf{z}',\mathbf{w}')\\B(\mathbf{z}',\mathbf{w}')&Q(\mathbf{w}')\end{pmatrix}
\simeq 2^r\hyper.
$$
Therefore we have $2^{r+1}\z_2 \subset Q(K) \subset Q(\ell')$.  Since we are assuming that $\nu_2(\ell')=\infty$, this is a contradiction. This completes the proof.
\end{proof}

\section{The rank of new regular $\z$-lattices}

Let $A$ be the set of all primes $p$ such that there is a primitive regular $\z$-lattice $L$ with rank greater than or equal to $4$ satisfying $\nu_p(L)>1$.
Then $A$ is a finite set, as stated in Remark \ref{rmkprime}, and thus we may write
$$
A=\{p_1,p_2,\dots,p_a\}.
$$
For each $\eta=(\eta_{p_1},\eta_{p_2},\dots,\eta_{p_a}) \in \prod_{1\le i\le a} \z_{p_i}$, define a set $S_{\eta}$ by 
$$
S_{\eta}=\left\{ n\in \n : \eta_{p_i}^{-1}n\in (\z_{p_i}^{\times})^2,\ i=1,2,\dots,a \right\}.
$$
For each $i=1,2,\dots,a$, we choose a positive integer $\nu'_{p_i}$ satisfying the condition given  in  Lemma \ref{lemnuprime}.
Define a set $B_{p_i} \subset \z_{p_i}$ by
$$
B_{p_i}=\begin{cases}\left\{ 2^a b : 0\le a\le \nu'_2+3,\ b=1,3,5,7 \right\}&\text{if}\ \ p_i=2, \\
\left\{ {p_i}^ab : 0\le a\le \nu'_{p_i}+3,\ b=1,\Delta_{p_i} \right\}&\text{if}\ \ p_i\in A-\{2\}, \end{cases}
$$
and a finite subset $B=\prod_{1\le i\le a} B_{p_i} \subset \prod_{1\le i\le a} \z_{p_i}$.
For each $\eta \in B$, let $T_{\eta}$ be a finite $S_{\eta}$-universality criterion set. Such a set  $T_{\eta}$ exists for any $\eta \in B$  by Theorem 3.3 of  \cite{KKO}.  Finally, we define $t_{\eta}=\vert T_{\eta}\vert$. 

Now, we are ready to prove our main theorem.

\begin{thm} \label{thmmain}
The rank of a primitive new regular $\z$-lattice is less than or equal to $\sum_{\eta \in B}t_{\eta}$.
\end{thm}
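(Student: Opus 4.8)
The plan is to exploit newness directly: it suffices to construct a sublattice $M\subseteq L$ with $\rk(M)\le\sum_{\eta\in B}t_\eta$ and $Q(M)=Q(L)$, for then $M$ cannot be proper and hence $M=L$. One may assume $\rk(L)\ge4$, the remaining cases being trivial since $\sum_{\eta\in B}t_\eta\ge|B|\ge3$. As $L$ is primitive, regular and of rank $\ge4$, Remark~\ref{rmkprime} and Lemma~\ref{lemnuprime} give $\nu_p(L)=1$ for $p\notin A$ and $\nu'_{p_i}(L)\le\nu'_{p_i}$ for $1\le i\le a$; combined with Section~2 and regularity this yields the clean description
$$
m\in Q(L)\iff\ord_{p_i}(m)\ge\nu_{p_i,s_i(m)}(L)\ \text{ for all }i,
$$
where $s_i(m)\in SC_{p_i}$ is the square class of $m$ at $p_i$; in particular $Q(L)$ is stable under multiplication by squares. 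For $\eta\in B$ write $\eta_{p_i}=p_i^{a_i}b_i$ with $0\le a_i\le\nu'_{p_i}+3$ and $b_i$ a unit, let $s_i^\eta$ be the square class of $\eta_{p_i}$, and set $\widetilde S_\eta:=\{nk^2:n\in S_\eta,\ k\in\n\}$; explicitly, $\widetilde S_\eta=\{m\in\n:\ \ord_{p_i}(m)\ge a_i,\ \ord_{p_i}(m)\equiv a_i\Mod2,\ \text{and the unit part of }m\text{ at }p_i\text{ lies in }b_i(\z_{p_i}^\times)^2,\ \forall i\}$. Put $a_i^{*}:=\max\{a_i,\nu_{p_i,s_i^\eta}(L)\}$, let $\eta^{*}$ be the tuple with exponents $a_i^{*}$, and $w_\eta:=\prod_i p_i^{(a_i^{*}-a_i)/2}$ (the exponents are nonnegative integers, as $\nu_{p_i,s_i^\eta}(L)\equiv a_i\Mod2$). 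An elementary computation with the description above gives $\widetilde S_\eta\cap Q(L)=\widetilde S_{\eta^{*}}=w_\eta^{2}\,\widetilde S_\eta$.

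\textbf{The shallow part.} Let $B^{+}=\{\eta\in B:S_\eta\subseteq Q(L)\}$. For $\eta\in B^{+}$ pick $v_n\in L$ with $Q(v_n)=n$ for each $n\in T_\eta$ and set $M_\eta=\sum_{n\in T_\eta}\z v_n$, a sublattice of rank $\le t_\eta$. Since $M_\eta$ represents all of $T_\eta$, it is $S_\eta$-universal by the defining property of $T_\eta$, so $\widetilde S_\eta\subseteq Q(M_\eta)$. Using the displayed description one checks that $\bigcup_{\eta\in B^{+}}\widetilde S_\eta=Q^{\circ}$, where $Q^{\circ}:=\{m\in Q(L):\nu_{p_i,s_i(m)}(L)\le\nu'_{p_i}+3\ \forall i\}$: the inclusion $\widetilde S_\eta\subseteq Q^{\circ}$ for $\eta\in B^{+}$ is immediate, and for $m\in Q^{\circ}$ the tuple $\eta$ with $\eta_{p_i}=p_i^{\nu_{p_i,s_i(m)}(L)}\cdot(\text{unit part of }m)$ lies in $B^{+}$ and satisfies $m\in\widetilde S_\eta$.

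\textbf{The deep part: the heart of the proof.} It remains to cover $Q(L)\setminus Q^{\circ}$, and here the near-orthogonality lemmas are essential. Call $\eta\in B$ \emph{admissible} if $a_i\ge\nu_{p_i,s_i^\eta}(L)$ whenever $\nu_{p_i,s_i^\eta}(L)\le\nu'_{p_i}+3$, and $a_i\ge1$ (for $p_i$ odd), resp. $a_i\ge2$ (for $p_i=2$), whenever $\nu_{p_i,s_i^\eta}(L)>\nu'_{p_i}+3$. For an admissible $\eta\in B\setminus B^{+}$, admissibility forces $w_\eta^{2}n\in Q(L)$ for all $n\in T_\eta$, so one may choose $v_n\in L$ with $Q(v_n)=w_\eta^{2}n$ and set $M_\eta=\sum_{n\in T_\eta}\z v_n$. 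Put $c_i=(a_i^{*}-a_i)/2$. At each index with $c_i>0$ one has $\nu_{p_i}(L)=\nu_{p_i,s_i^\eta}(L)>\nu'_{p_i}+3\ge\nu'_{p_i}(L)+3$, so Lemma~\ref{lemnuodd} (for $p_i$ odd) makes the first ternary part in an ascending $p_i$-adic diagonalization of $L_{p_i}$ anisotropic, and Lemma~\ref{lemnu2} (for $p_i=2$) makes $L_2$ of type A; moreover every $v_n$ satisfies $\ord_{p_i}Q(v_n)=2c_i+a_i=\nu_{p_i}(L)$. Hence Lemma~\ref{lemanisoodd} (resp. Lemma~\ref{lemaniso2}) gives $\ord_{p_i}B(v_n,v_{n'})\ge\nu_{p_i}(L)-1$ (resp. $\ge\nu_{p_i}(L)-2$), which by admissibility is $\ge\nu_{p_i}(L)-a_i=2c_i=\ord_{p_i}(w_\eta^{2})$. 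As this holds at every prime (trivially when $c_i=0$), $w_\eta^{2}$ divides every entry of the Gram matrix of $M_\eta$, so $M_\eta\simeq w_\eta^{2}\widehat P$ for the positive definite integral $\z$-lattice $\widehat P$ having the same underlying module as $M_\eta$ and bilinear form $w_\eta^{-2}B$; by construction $\widehat P$ represents every $n\in T_\eta$. Therefore $\widehat P$ is $S_\eta$-universal by the criterion-set property, so $\widetilde S_\eta\subseteq Q(\widehat P)$ and $Q(M_\eta)=w_\eta^{2}Q(\widehat P)\supseteq w_\eta^{2}\widetilde S_\eta=\widetilde S_\eta\cap Q(L)$.

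\textbf{Conclusion.} Each $m\in Q(L)\setminus Q^{\circ}$ lies in $\widetilde S_\eta\cap Q(L)$ for a suitable admissible $\eta\in B\setminus B^{+}$: at the nonempty set of indices with $\nu_{p_i,s_i(m)}(L)>\nu'_{p_i}+3$ take $a_i\in\{1,2,3\}$ of the parity of $\ord_{p_i}(s_i(m))$ and $\ge1$ (resp. $\ge2$) according as $p_i$ is odd or $2$, at the other indices take $a_i=\nu_{p_i,s_i(m)}(L)$, and in all cases let $b_i$ be the unit part of $m$ at $p_i$. Setting $M=\sum_{\eta\in B^{+}}M_\eta+\sum_{\eta\in B\setminus B^{+}\text{ admissible}}M_\eta$ gives $\rk(M)\le\sum_{\eta\in B}t_\eta$ and $Q(M)\supseteq Q^{\circ}\cup(Q(L)\setminus Q^{\circ})=Q(L)$; since $M\subseteq L$ also $Q(M)\subseteq Q(L)$, so $Q(M)=Q(L)$, and newness forces $M=L$, proving $\rk(L)\le\sum_{\eta\in B}t_\eta$. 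I expect the genuine obstacle to be precisely the third step: making a universality criterion set $T_\eta$ for a set of integers $S_\eta$ that $L$ need not represent at all serve — after scaling the chosen vectors by $w_\eta$ and invoking the near-orthogonality of deep vectors from Lemmas~\ref{lemanisoodd}--\ref{lemaniso2} — to cover the genuine ``tail'' $\widetilde S_\eta\cap Q(L)$ of $Q(L)$; the point of taking the small exponents $a_i\in\{1,2,3\}$ (rather than $0$) at deep coordinates is exactly to make the divisibility $w_\eta^{2}\mid B(v_n,v_{n'})$ hold.
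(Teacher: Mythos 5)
Your proposal is correct and follows essentially the same route as the paper: decompose $Q(L)$ up to squares into the sets $S_\eta$, represent a finite universality criterion set $T_\eta$ by vectors of $L$ (rescaled by $w_\eta$ at the primes where $\nu_{p}(L)>\nu'_{p}(L)+3$), and use Lemmas \ref{lemnuodd}, \ref{lemnu2}, \ref{lemanisoodd}, and \ref{lemaniso2} to verify that the rescaled lattice is integral, so that newness forces the resulting sublattice of bounded rank to be all of $L$. The paper organizes the bookkeeping as an injection $H(L)\hookrightarrow B$ (via the elements $\xi_i$ and exponents $\kappa_j$) rather than your shallow/deep partition of $B$ with the admissibility condition, but the substance is identical.
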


\begin{proof}
Let $L$ be a primitive new regular $\z$-lattice with rank greater than or equal to $4$.
It suffices to show that there is a $\z$-sublattice $\widetilde{L}$ of $L$ with $\text{rank}(\widetilde{L})\le \sum_{\eta \in B}t_{\eta}$ such that $Q(\widetilde{L})=Q(L)$.
Let
$$
E=\{ 1\le i\le a : \nu'_{p_i}(L)+3<\nu_{p_i}(L)\}.
$$
For each $i\in E$, we define $\xi_i={p_i}^2 c_i$, where $c_i\in SC_{p_i}$ such that $\nu_{p_i,c_i}(L)=\nu_{p_i}(L)$.
Let
$$
H(L)=\left\{ \left( \eta_{p_1,s_1}(L),\eta_{p_2,s_2}(L),\dots,\eta_{p_a,s_a}(L)\right)\in \prod_{1\le i\le a} \z_{p_i} : s_i\in SC_{p_i} \right\}.
$$
Since we are assuming that $L$ is regular, we may easily check that 
$$
Q(L)-\{0\}=\displaystyle \left\lbrace u^2v : u \in \mathbb N, v \in \bigcup_{\eta \in H(L)} S_{\eta}\right\rbrace.
$$
For each
$$
\eta=(\eta_{p_1,h_1}(L),\eta_{p_2,h_2}(L),\dots,\eta_{p_a,h_a}(L))\in H(L),
$$
 where $h_i\in SC_{p_i}$ for $i=1,2,\dots,a$,
we define
$$
E_{\eta}=\{ i\in E : \nu_{p_i,h_i}(L)=\nu_{p_i}(L)\}.
$$
Note that for any $j \not \in E_{\eta}$, either $\nu_{p_j,h_j}(L) \ne \nu_{p_j}(L)$ or $\nu_{p_j}(L)\le \nu'_{p_j}(L)+3$. Hence we have
$$
\ord_{p_j}(\eta_{p_j,h_j}(L)) \le \nu'_{p_j}(L) \le \nu'_{p_j}+3.
$$
For each $j\in E_{\eta}$,  let $\kappa_j$ be an integer such that $\eta_{p_j,h_j}(L)={p_j}^{2\kappa_j}\xi_j$.
Since $2 \le \ord_{p_j}(\xi_j)\le 3$, we have
\begin{equation} \label{eqmain2}
2 \le \nu'_{p_j}(L)+1 \le  \nu_{p_j}(L)-3 \le  2\kappa_j=\nu_{p_j,h_j}(L)-\ord_{p_j}(\xi_j)\le \nu_{p_j}(L)-2.
\end{equation}
Hence $\kappa_j$ is a positive integer which depends on the  $\z$-lattice $L$.
 For each $i=1,2,\dots,a$, we define
$$
\eta'_i=\begin{cases}\eta_{p_i,h_i}(L)&\text{if}\ \ i\not\in E_{\eta},\\
\xi_i&\text{otherwise},\end{cases}
$$
and $\eta'=(\eta'_1,\eta'_2,\dots,\eta'_a)$.
Clearly, the map $\eta \mapsto \eta'$ from $H(L)$ to $B$ is a well-defined injective map. From the definition, one may easily show that 
$S_{\eta}=\{ m^2n : n\in S_{\eta'} \}$, where $m=m(L)=\prod_{j\in E_{\eta}}{p_j}^{\kappa_j}$.

Let $T_{\eta'}=\{n_1,n_2,\dots,n_{t_{\eta'}}\}$ be a finite $S_{\eta'}$-universality criterion set.  
For each $l=1,2,\dots,t_{\eta'}$, since $m^2n_l\in S_{\eta}\subset Q(L)$, there is a vector $\mathbf{x}_l\in L$ such that
$Q(\mathbf{x}_l)=m^2n_l$.
For $\mathbf{y}_l=\displaystyle\frac{1}{m}\cdot \mathbf{x}_l \in \q L$, we define a $\z$-lattice  
$$
L(\eta')=\z \mathbf{y}_1+\z \mathbf{y}_2+\cdots+\z \mathbf{y}_{t_{\eta'}}.
$$
We claim that the $\z$-lattice $L(\eta')$ in the quadratic space $\q L$ is integral.
To prove the claim, it suffices to show that, for any $l, l'$ with  $1\le l<l'\le t_{\eta'}$,
$$
B(\mathbf{x}_{l},\mathbf{x}_{l'})\equiv 0\Mod {p^{2\kappa_j}}\ \ \text{for any $j\in E_{\eta}$}.
$$
However, this follows immediately from $\eqref{eqmain2}$ and Lemmas \ref{lemnuodd}, \ref{lemnu2}, \ref{lemanisoodd}, and \ref{lemaniso2}.
Therefore $L(\eta')$ is an integral $\z$-lattice which represents all integers in $T_{\eta'}$.
Since $T_{\eta'}$ is a finite $S_{\eta'}$-universality criterion set, the $\z$-lattice $L(\eta')$ represents all integers in $S_{\eta'}$, that is, 
$S_{\eta'}\subseteq Q(L(\eta'))$.
Let $L(\eta)$ be the sublattice $\z \mathbf{x}_1+\z \mathbf{x}_2+\cdots+\z \mathbf{x}_{t_{\eta'}}$ of $L$.
Since $S_{\eta}=\{m^2n : n\in S_{\eta'}\}$ as noted above, we have $S_{\eta}\subseteq Q(L(\eta))$.
Consequently, if we define 
$$
\widetilde{L}=\sum_{\eta \in H(L)} L(\eta) \le L,
$$
then $S_{\eta}\subset Q(L(\eta))\subseteq Q(\widetilde{L})$ for any $\eta \in H$. Therefore we have
$$
Q(L)-\{0\}=\left\{ u^2v : u\in \n,\ v\in\bigcup_{\eta \in H(L)}S_{\eta} \right\}\subseteq Q(\widetilde{L}),
$$
which implies that $Q(\widetilde{L})=Q(L)$. Note that
$$
\text{rank}(\widetilde{L})\le \sum_{\eta \in H}t_{\eta'}\le \sum_{\eta' \in B}t_{\eta'}.
$$
This completes the proof.
\end{proof}


\end{document}